\def\toclevel@title{-1}
\def\toclevel@author{0}
\pgfplotsset{compat=newest}
\DeclareMathSymbol{\shortminus}{\mathbin}{AMSa}{"39}
\DeclareMathOperator{\divergence}{div}
\DeclareMathOperator{\per}{Per}
\newcommand{\AAA}{\mathcal{A}}
\newcommand{\dx}[1][x]{\;\mathrm{d}#1}
\newcommand{\gammaconvergence}{$\Gamma$-convergence}
\newcommand{\gammaconverges}{$\Gamma$-converges}
\newcommand{\gammalimit}{$\Gamma$-limit}
\newcommand{\imdom}{\Omega}
\newcommand{\indexset}[1][n]{\{1,\dots, #1\}}
\newcommand{\bv}{\text{BV}(\imdom)}
\newcommand{\lebesgue}[1]{\left\vert #1 \right\vert}
\newcommand{\N}{\mathbb{N}}
\newcommand{\OO}{\mathcal{O}}
\newcommand{\Pe}{\mathcal{P}_{\varepsilon}}
\newcommand{\perIm}[1]{\per\left(#1, \imdom\right)}
\newcommand{\Pp}{\mathcal{P}}
\newcommand{\R}{\mathbb{R}}
\newcommand{\RExt}{\overline{\mathbb{R}}}
\newcommand{\toweakstar}{\stackrel{*}{\rightharpoonup}}
\newcommand{\tv}[1]{\vert #1 \vert_{\text{BV}(\imdom)}}
\bmdefine{\bchi}{\chi}
\bmdefine{\bmu}{\mu}
\bmdefine{\bOO}{\OO}
\bmdefine{\bSigma}{\Sigma}
\bmdefine{\bsigma}{\sigma}
\bmdefine{\bV}{V}
\acrodef{ms}[MS]{Mumford--Shah}
\acrodef{bv}[BV]{bounded variation}
\acrodef{hsi}[HSI]{hyperspectral imaging}
\acrodef{pca}[PCA]{principal components analysis}
\acrodef{tv}[TV]{total variation}
\begin{document}

\title*{On the importance of the \texorpdfstring{$\varepsilon$}{epsilon}-regularization of the distribution-dependent Mumford--Shah model for hyperspectral image segmentation}

\titlerunning{Importance of \texorpdfstring{$\varepsilon$}{epsilon}-regularization of the distribution-dependent MS model}

\author{Jan-Christopher Cohrs\orcidID{0000-0001-9173-4015} and\\
Benjamin Berkels\orcidID{0000-0002-6969-187X}}

\authorrunning{J.-C. Cohrs, B. Berkels}

\institute{Jan-Christopher Cohrs \at Aachen Institute for Advanced Study in Computational Engineering Science (AICES), RWTH Aachen University, Germany, \email{cohrs@aices.rwth-aachen.de}
\and Benjamin Berkels \at Aachen Institute for Advanced Study in Computational Engineering Science (AICES), RWTH Aachen University, Germany, \email{berkels@aices.rwth-aachen.de}
}

\maketitle
\abstract*{Recently, the distribution-dependent Mumford--Shah model for hyperspectral image segmentation was introduced.
It approximates an image based on first and second order statistics using a data term, that is built of a Mahalanobis distance plus a covariance regularization, and the total variation as spatial regularization.
Moreover, to achieve feasibility, the appearing matrices are restricted to symmetric positive definite ones with eigenvalues exceeding a certain threshold.
This threshold is chosen in advance as a data-independent parameter.
In this article, we study theoretical properties of the model.
In particular, we prove the existence of minimizers of the functional and show its \gammaconvergence{} when the threshold regularizing the eigenvalues of the matrices tends to zero.
It turns out that in the \gammalimit{} we lose the guaranteed existence of minimizers; and we give an example of an image where the \gammalimit{} indeed has no minimizer.
Finally, we derive a formula for the minimum eigenvalues of the covariance matrices appearing in the functional that hints under which conditions the functional is able to handle the data without regularizing the eigenvalues.
The results of this article demonstrate the significance and importance of the eigenvalue regularization to the model and that it cannot be dropped without substantial modifications.}

\abstract{Recently, the distribution-dependent Mumford--Shah model for hyperspectral image segmentation was introduced.
It approximates an image based on first and second order statistics using a data term, that is built of a Mahalanobis distance plus a covariance regularization, and the total variation as spatial regularization.
Moreover, to achieve feasibility, the appearing matrices are restricted to symmetric positive definite ones with eigenvalues exceeding a certain threshold.
This threshold is chosen in advance as a data-independent parameter.
In this article, we study theoretical properties of the model.
In particular, we prove the existence of minimizers of the functional and show its \gammaconvergence{} when the threshold regularizing the eigenvalues of the matrices tends to zero.
It turns out that in the \gammalimit{} we lose the guaranteed existence of minimizers; and we give an example of an image where the \gammalimit{} indeed has no minimizer.
Finally, we derive a formula for the minimum eigenvalues of the covariance matrices appearing in the functional that hints under which conditions the functional is able to handle the data without regularizing the eigenvalues.
The results of this article demonstrate the significance and importance of the eigenvalue regularization to the model and that it cannot be dropped without substantial modifications.}

\section{Introduction}
Image processing comprises several problems, such as image registration, deblurring, denoising, inpainting or segmentation.
Image registration aims to align two or more images of the same object.
In image deblurring, one tries to recover the original image from a blurry version of it.
Denoising deals with the reduction of noise in an image.
The task of filling missing or damaged parts of an image is referred to as inpainting.
Image segmentation is concerned with the task to partition an image into meaningful regions where the term \emph{meaningful} depends on the respective setting, i.e., the image modality and which information one wants to extract from the image.
A well-established model in image processing is the \ac{ms} functional \cite{MuSh89}.
It is very popular in image segmentation, but also used for image restoration \cite{BaChCh11,JuBrCh11}.
For example, it has been adapted for inpainting \cite{EsSh02,HaRoKu22}, but variants are also applied in deblurring \cite{HaDuVo07}.
The functional in its original form aims to find non-trivial piecewise smooth approximations to images and consists of the squared difference as a data term plus two regularizing terms \cite{MuSh89}.
For a fixed set of segment boundaries, it is shown in \cite[Prop. 3]{Da05} with a convexity argument that a unique minimizer exists.
In \cite[Th. 1.1]{DeCaLe89} and for a slightly different version of the functional in \cite[Th. 2]{BaChCh11}, the authors prove that when the set of segment boundaries is replaced by the set of discontinuities of the approximating function, a minimizer of the functional exists.
Rondi and Santosa \cite{RoSa01} leverage the \ac{ms} functional for electrical impedance tomography.
They equip it with a data term measuring the difference of an operator applied to the approximating function and the original data in the norm of the space of bounded, linear operators that map from the space of traces of $L^2(\imdom)$ on $\partial \imdom$ with zero means to itself.
Moreover, they prove existence results for a \ac{ms} variant on two function classes for general data terms that satisfy a continuity condition.
Fornasier et al. propose a structurally similar \ac{ms}-based functional; however, their data term is the $L^2$-distance between a linear, non-invertible operator applied to the original image (in contrast to before where the operator was applied to the approximation) and the approximating function \cite{FoMaSo11}.
They provide counterexamples in which existence of minimizers cannot be guaranteed but prove two existence results for certain classes of linear operators.
The authors of \cite{CaChZe13} introduce a variant of the \ac{ms} model with the squared difference as data term including a blurring operator, but without the boundary regularization and prove the existence of a unique minimizer.
In \cite{CaChNi17}, this model is further modified with a data term that deals with images that are corrupted by Poisson noise; and also for this model the existence of a unique minimizer is proven.
Mumford and Shah further propose to restrict their functional to piecewise constant functions and show for that restricted model the existence of minimizers for continuous input images \cite[Th. 5.1]{MuSh89}.

Moreover, alternative data terms for the \ac{ms} functional have been introduced, where existence results have not been proven yet.
In \cite{TsYeWi01}, the authors generalize the original model by multiplying the data term with a location-dependent weighting factor to give less or no weight to noisy or corrupt pixels.
Ramlau and Ring \cite{RaRi07} build on the piecewise constant \ac{ms} model to introduce a data term for X-ray tomography, which they define as the squared $L^2$-distance of the data and the Radon transform of an unknown density function.

In this article, we will consider the distribution-dependent \ac{ms} model for hyperspectral image segmentation that was introduced in \cite{CoBaBe22}.
We will call it $\varepsilon$AMS in the following, short for \emph{$\varepsilon$-regularized anisotropic \ac{ms} model}.
It consists of the piecewise constant \ac{ms} functional where the squared $L^2$-distance in the data term was replaced by a non-squared Mahalanobis distance \cite{Ma36} plus a regularizing term to penalize covariance matrix estimates with large eigenvalues.
Covariance matrices are symmetric and positive semi-definite by definition.
However, $\varepsilon$AMS even requires them to be positive definite to ensure the invertibility and therefore the feasibility of the model.
To this end, a regularizing parameter $\varepsilon > 0$ is introduced that defines a lower bound for the eigenvalues of the covariance matrices and therefore keeps them positive by setting eigenvalues of covariance matrices to $\varepsilon$, if they are smaller than this threshold.
In terms of images one can think of $\varepsilon$ as the minimum variance of the spectra possible in a segment.
Consequences of this regularization are: it ensures invertibility of the covariance matrices and improves their conditions, the covariance matrices are optimized over a closed set and $\varepsilon$ ensures a lower bound of the functional.
However, $\varepsilon$ is a data-independent parameter that has to be chosen before the application of the model. This choice is not trivial and it is also hard to interpret why a specific choice leads to good results and others do not.
In particular, the choice of $\varepsilon$ potentially excludes minima by restricting the admissible set.
Nevertheless, the parameter $\varepsilon$ plays a crucial role in the considered model and we will see more arguments why we need it in $\varepsilon$AMS in the next section.

Since segmenting an image means finding minimizers of $\varepsilon$AMS, the question arises if minimizers exist.
We will state and prove in \cref{subsec:epsAMS-existence-minimizers} an existence result, guaranteeing the existence of minimizers of $\varepsilon$AMS.
Also in the proof, a key factor is the $\varepsilon$-regularization, which emphasizes its importance.
As a lower bound for the eigenvalues of the covariances matrices, $\varepsilon$ is supposed to be chosen small.
In \cref{subsec:epsAMS-gamma-convergence}, we will show the \gammaconvergence{} of the functional to a \gammalimit{} when $\varepsilon \to 0$.
It turns out that minimizers of the \gammalimit{} do not necessarily exist. In particular, we will see a counterexample for which the \gammalimit{} has no minimizer in \cref{subsec:epsAMS-unboundedness-J-0}.
Finally, we derive a formula depending on the data that shows the minimum values for the eigenvalues of the covariance matrices in \cref{subsec:epsAMS-boundedness-eigenvalues}, which helps to understand what goes wrong in case of the \gammalimit{}.

\section{The distribution-dependent Mumford--Shah model}
The model that we will study is a variant of the \ac{ms} segmentation functional \cite{MuSh89}, equipped with the indicator function proposed in \cite{CoBaBe22}.
We will see a detailed description of the model in \cref{subsec:epsAMS-model-description} and show the existence of minimizers of the model afterwards (cf. \cref{subsec:epsAMS-existence-minimizers}).
The model contains a (small) regularization parameter to ensure the feasibility of the model and which guarantees the existence of minimizers of the functional.
We will show the \gammaconvergence{} of the functional to a \gammalimit{} when this regularization parameter tends to $0$ in \cref{subsec:epsAMS-gamma-convergence}.
Under suitable conditions, not only the functional \gammaconverges{} to a \gammalimit{} but also the minimizers, which would give us the existence of minimizers also for the limit functional.
However, as we will see in \cref{subsec:epsAMS-unboundedness-J-0}, we do not have a guaranteed minimizer for the limit functional anymore, which emphasizes the importance of a regularization of the eigenvalues of the covariance estimates to bound them away from $0$.

\subsection{Model description of \texorpdfstring{$\varepsilon$}{epsilon}AMS}\label{subsec:epsAMS-model-description}
Throughout this article, let $d\in \N$ and $\imdom \subseteq \R^d$ be a measurable, bounded domain with Lipschitz boundary.
Furthermore, let $g \in L^\infty(\imdom, \R^L)$ be a hyperspectral image with $L \in \N$ channels, $\varepsilon, \eta > 0$ and $k\in \N$ the number of segments, which is chosen in advance.
Since $\imdom$ is bounded, $g\in L^p(\imdom, \R^L)$ for every $p \in [1,\infty)$.
For $A\subseteq\mathbb{R}^d$ measurable, let $\lebesgue{A}$ be the $d$-dimensional Lebesgue measure of $A$.
Note that for such sets $A$ we often equivalently consider the characteristic functions $\chi_A \colon \R^d \to \{0,1\}$ and switch between these two representations without further notice.
Let $\OO \subseteq \R^d$ be measurable.
Since $\imdom$ is bounded, we get $\lebesgue{\OO \cap \imdom} < \infty$ and thus $\chi_{\OO} \in L^1(\imdom)$.
Following \cite[Def. 3.4, Def. 3.35]{AmFuPa00}, we define the \emph{perimeter $\perIm{\OO}$ of $\OO$ in $\imdom$} as
\begin{equation*}
    \perIm{\OO} := \sup \left\{\int_\OO \divergence \varphi \dx \: \middle| \; \varphi \in C_c^1(\Omega, \R^d), \Vert \Vert \varphi \Vert_2 \Vert_{L^\infty(\imdom)} \leq 1 \right\}.
\end{equation*}
Because $\chi_\OO \in L^1(\imdom)$, we have $\perIm{\OO} < \infty$ $\Leftrightarrow$ $\chi_\OO \in \bv$, cf. \cite[Prop. 3.6]{AmFuPa00}. In particular, $\perIm{\OO}= \tv{\chi_\OO}$. 
Here, $\tv{\cdot}$ denotes the Total Variation semi-norm and $\bv$ the space of functions of Bounded Variation. For a general introduction to $\bv$ and sets of finite perimeter, we refer to \cite{AmFuPa00}.

The set of all partitions of $\imdom$ into $k$ measurable subsets with finite perimeter %
is
\begin{equation*}
    \begin{split}
    \AAA := 
    &\left\{ 
        \left( \OO_{1}, \OO_{2}, \dots, \OO_{k}\right) \; \middle| \;
        \forall l\in \indexset[k]: \OO_{l} \subseteq \imdom \text{ measurable},
        \vphantom{{\textstyle\bigcup\nolimits_{l=1}^k}}\right.\\
        &\left. \hphantom{\left\{\right.} \perIm{\OO_{l}} < \infty, \ \forall m \neq l: \OO_{l} \cap \OO_{m} = \emptyset, \ {\textstyle\bigcup\nolimits_{l=1}^k} \OO_{l} = \imdom
    \right\}.
    \end{split}
\end{equation*}
We want to point out here that we do not require the closure of the union to be equal to the closure of $\imdom$ but a slightly stronger condition.
The reason is that we prove the partition property later using the characteristic functions of the involved sets which have to sum up to one at every element of $\imdom$.
This would not be the case for the condition with the closures.
Then, the \ac{ms} segmentation functional \cite{MuSh89} is
\begin{equation}\label{eq:model-description-ms-functional}
        J_{\text{MS}} \colon \AAA \to \R,
        \left( \OO_{1}, \OO_{2}, \dots, \OO_{k}\right) \mapsto \sum_{l=1}^{k} \left[\int_{\OO_l} f_l(x) \dx + \lambda \perIm{\OO_l}\right],
\end{equation}
where $\lambda > 0$ is a parameter that balances the data term, i.e. the sum of the integrals over $\OO_l$, and the regularizing term, i.e. the sum over $\perIm{\OO_l}$.
The so-called \emph{indicator functions} $f_l \colon \imdom \to \R$ play a key role in the \ac{ms} functional.
They define the notion of homogeneity for segmentation
and have to be chosen according to the task at hand.
Let
\begin{equation}\label{eq:def-eigenvalue-operator}
    \sigma \colon \left\{M \in \R^{L \times L} \; \middle| \; M^T = M\right\} \to \R^L
\end{equation}
be the mapping that returns the eigenvalues of a symmetric matrix in decreasing order, i.e., $\sigma(M)_1 \geq \sigma(M)_2 \geq \dots \geq \sigma(M)_L$.
Furthermore, we define the set of all symmetric and positive definite matrices as
\begin{equation*}
    \Pp := \left\{
        M \in \R^{L \times L} \; \middle| \; M^T = M, \ \forall z \in \R^L \setminus \{0\} : z^T M z > 0
    \right\},
\end{equation*}
and based on that we define
\begin{equation*}
    \Pe := \left\{
        M \in \Pp \; \middle| \; \forall i \in \indexset[L] : \sigma(M)_i \geq \varepsilon^2
    \right\},
\end{equation*}
which is the set of all symmetric and positive definite matrices with eigenvalues of at least $\varepsilon^2$.
The indicator function for hyperspectral image segmentation from \cite{CoBaBe22} is
\begin{equation*}
    f_{l}(x; \mu_l, \Sigma_l) = \sqrt{\left( g(x) - \mu_l \right)^T \Sigma_l^{-1} \left( g(x) - \mu_l \right) + \eta} + \log \det \Sigma_l,
\end{equation*}
where $\mu_l \in \R^L$ and $\Sigma_l \in \Pe$.
The variables $\mu_l$ and $\Sigma_l$ are taking the roles of estimates of the mean and the covariance of the spectral distribution of the $l$-th segment, respectively.
We will use the notation
\begin{equation*}
    \Vert z \Vert_{\Sigma_l^{-\!1}\!\!, \eta} := \sqrt{z^T \Sigma_l^{-1} z + \eta}
\end{equation*}
for $z\in \R^L$, because the square root term can be seen as a regularized norm on $\R^L$, since $z \mapsto \sqrt{z^T M z}$ is a norm for any positive definite matrix $M$.
Hence, we have
\begin{equation*}
    f_{l}(x; \mu_l, \Sigma_l) = \Vert g(x) - \mu_l \Vert_{\Sigma_l^{-\!1}\!\!, \eta} + \log \det \Sigma_l.
\end{equation*}
The restriction of the eigenvalues of $\Sigma_l$ to at least $\varepsilon^2$ was introduced to ensure the invertibility of $\Sigma_l$, which also implies the feasibility of $\log \det \Sigma_l$.
As we will see in \cref{subsec:epsAMS-unboundedness-J-0}, this restriction cannot be dropped if we want to keep the feasibility and the guarantee that minimizers exist.
Defining $X_\varepsilon := \AAA \times \left(\R^L\right)^k \times \left(\Pe\right)^k$ and
\begin{equation*}
    \bOO := \left(\OO_1, \dots, \OO_k \right)\in \AAA,
    \bmu := \left(\mu_1, \dots, \mu_k \right)\in \big(\R^L\big)^k,
    \bSigma := \left(\Sigma_1, \dots, \Sigma_k \right)\in \left(\Pe\right)^k,
\end{equation*}
the objective function for hyperspectral image segmentation is
\begin{equation}\label{eq:definition-J-epsilon}
    \begin{split}
        J_{\varepsilon} \colon X_\varepsilon &\to \R,\\
        (\bOO, \bmu, \bSigma) &\mapsto \sum_{l=1}^k \left[
            \int_{\OO_l} \Vert g(x) - \mu_l \Vert_{\Sigma_l^{-\!1}\!\!, \eta} + \log \det \Sigma_l \dx + \lambda \perIm{\OO_l}\right].
    \end{split}
\end{equation}
By optimizing $J_\varepsilon$ over all parameters, we obtain a segmentation and estimates for the means $\mu_l$ and covariances $\Sigma_l$ for $l \in \indexset[k]$ from the data.

\subsection{Existence of minimizers of \texorpdfstring{$\varepsilon$}{epsilon}AMS}\label{subsec:epsAMS-existence-minimizers}
In this section, we use the \emph{direct method in the calculus of variations} to show %
that minimizers of $J_\varepsilon$ exist.
The direct method consists of four steps:
\begin{enumerate}
    \item finding a lower bound
    \begin{equation*}
        \underline{J_\varepsilon} := \inf_{(\bOO, \bmu, \bSigma) \in X_\varepsilon} J_\varepsilon[\bOO, \bmu, \bSigma] > - \infty,
    \end{equation*}
    \item choosing a minimizing sequence $(\bOO_n, \bmu_n, \bSigma_n)_{n\in \N} \subseteq X_\varepsilon$, i.e., the sequence fulfills $J_{\varepsilon}[\bOO_n, \bmu_n, \bSigma_n] \overset{n \to \infty}{\longrightarrow} \underline{J_\varepsilon}$,
    \item constructing a subsequence $(\bOO_{n_m}, \bmu_{n_m}, \bSigma_{n_m})_{m\in \N}$ and an element\newline $(\bOO^*, \bmu^*, \bSigma^*) \in X_\varepsilon$ such that
    \begin{equation}\label{eq:lower-semicontinuity-minimizing-sequence}
        J_\varepsilon[\bOO^*, \bmu^*, \bSigma^*] \leq \liminf_{m\to \infty} J_\varepsilon[\bOO_{n_m}, \bmu_{n_m}, \bSigma_{n_m}],
    \end{equation}
    \item concluding that $(\bOO^*, \bmu^*, \bSigma^*)$ is a minimizer of $J_\varepsilon$ due to
    \begin{equation}\label{eq:chain-of-inequalities}
        \begin{split}
            \underline{J_\varepsilon} &= \lim_{m \to \infty} J_\varepsilon[\bOO_{n_m}, \bmu_{n_m}, \bSigma_{n_m}] = \liminf_{m\to \infty} J_\varepsilon[\bOO_{n_m}, \bmu_{n_m}, \bSigma_{n_m}]\\
            &\geq J_\varepsilon[\bOO^*, \bmu^*, \bSigma^*] \geq \underline{J_\varepsilon}.
        \end{split}
    \end{equation}
\end{enumerate}
Before stating and proving the first main result, we state some definitions and lemmas.
Note that we use \emph{convergence in measure} (as defined below) when working with sequences of measurable, bounded sets.
\begin{definition}\label{def:convergence-in-measure}
    Let $(A_n)_{n\in\N}$ be a sequence of measurable, bounded subsets of $\R^d$ and $A \subseteq \R^d$ a measurable, bounded set.
    We say that \emph{$(A_n)_{n\in\N}$ converges to $A$ in measure in $\imdom$} if the Lebesgue measure of the symmetric difference in $\imdom$ converges to 0, i.e.,
    \begin{equation*}
        \lebesgue{(A_n \bigtriangleup A) \cap \imdom} \to 0
    \end{equation*}
    for $n\to \infty$.
    We use the notation: $A_n \to A$ for $n\to\infty$.
\end{definition}
\noindent
This implies that also the sequence of Lebesgue measures of a convergent sequence of sets is convergent:
\begin{lemma}\label{lem:convergence-of-measures}
    Let $(A_n)_{n\in\N}$ be a sequence of measurable, bounded subsets of $\R^d$ and $A \subseteq \R^d$ measurable and bounded.
    If $(A_n)_{n\in\N}$ converges to $A$ in measure in $\imdom$, then also the measures of $(A_n)_{n\in\N}$ restricted to $\imdom$ converge to the measure of $A$ restricted to $\imdom$ in $\R$, i.e.,
    \begin{equation*}
        \lebesgue{A_n \cap \imdom} \to \lebesgue{A \cap \imdom} \text{ for } n\to \infty.
    \end{equation*}
\end{lemma}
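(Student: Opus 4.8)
The plan is to reduce the statement to the triangle inequality for the Lebesgue measure together with the elementary set-theoretic inclusions relating $A_n \cap \imdom$, $A \cap \imdom$ and their symmetric difference. First I would record that, since $\imdom$ is bounded, all the numbers $\lebesgue{A_n \cap \imdom}$ and $\lebesgue{A \cap \imdom}$ are finite, so that differences of these quantities are well-defined real numbers and the monotonicity and finite subadditivity of $\lebesgue{\cdot}$ may be used freely.

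The key step is the observation that for any two measurable sets $B, C \subseteq \R^d$ one has $B \subseteq C \cup (B \bigtriangleup C)$. Applying this with $B = A_n \cap \imdom$ and $C = A \cap \imdom$, and using that $(A_n \cap \imdom) \bigtriangleup (A \cap \imdom) = (A_n \bigtriangleup A) \cap \imdom$, monotonicity and subadditivity give
\begin{equation*}
    \lebesgue{A_n \cap \imdom} \leq \lebesgue{A \cap \imdom} + \lebesgue{(A_n \bigtriangleup A) \cap \imdom}.
\end{equation*}
Exchanging the roles of $A_n$ and $A$ yields the reverse inequality with the same error term, hence
\begin{equation*}
    \big\lvert \lebesgue{A_n \cap \imdom} - \lebesgue{A \cap \imdom} \big\rvert \leq \lebesgue{(A_n \bigtriangleup A) \cap \imdom}.
\end{equation*}
By \cref{def:convergence-in-measure}, the right-hand side tends to $0$ as $n \to \infty$, which is the claim.

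I expect no real obstacle here; the only point requiring a moment's care is the identity $(A_n \cap \imdom) \bigtriangleup (A \cap \imdom) = (A_n \bigtriangleup A) \cap \imdom$, which is a direct check on characteristic functions. Alternatively, one could phrase the whole argument in $L^1(\imdom)$ by writing $\lebesgue{A_n \cap \imdom} = \int_\imdom \chi_{A_n} \dx$ and noting that convergence in measure is exactly $L^1(\imdom)$-convergence of the characteristic functions, so that convergence of the integrals follows from continuity of the integral; but the set-theoretic version above is shorter and entirely self-contained.
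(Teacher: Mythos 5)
Your proof is correct and is essentially the paper's argument: the paper obtains the same key bound $\big\lvert \lebesgue{A_n \cap \imdom} - \lebesgue{A\cap \imdom} \big\rvert \leq \lebesgue{(A_n\bigtriangleup A) \cap \imdom}$ by writing the measures as integrals of characteristic functions and using the triangle inequality, i.e.\ exactly the $L^1$ variant you sketch as an alternative at the end. Your set-theoretic derivation via $B \subseteq C \cup (B \bigtriangleup C)$ and the identity $(A_n \cap \imdom) \bigtriangleup (A \cap \imdom) = (A_n \bigtriangleup A) \cap \imdom$ is a valid, equally elementary route to the same estimate.
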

\begin{proof}
    We have
    \begin{equation*}
    \begin{split}
        &\vert \lebesgue{A_n \cap \imdom} - \lebesgue{A\cap \imdom} \vert
        = \left\vert \int_{\imdom} \chi_{A_n}(x) \dx - \int_{\imdom} \chi_{A}(x) \dx \right\vert\\
        &\leq  \int_{\imdom} \vert \chi_{A_n}(x) - \chi_{A}(x) \vert \dx
        = \Vert \chi_{A_n} - \chi_{A} \Vert_{L^1(\imdom)}
        = \lebesgue{(A_n\bigtriangleup A) \cap \imdom} \overset{n \to \infty}{\longrightarrow} 0.
    \end{split}
    \end{equation*}
    The last equality is true because the symmetric difference is equal to the $L^1$-difference of the corresponding characteristic functions.
\end{proof}
\begin{lemma}\label{lem:pe-closed}
    Let $\varepsilon > 0$.
    The set $\Pe$ is a closed subset of $\R^{L\times L}$.
\end{lemma}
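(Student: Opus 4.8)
The plan is to establish closedness via sequences, which suffices since $\R^{L\times L}$ is a metric space. Let $(M_n)_{n\in\N}\subseteq\Pe$ converge to some $M\in\R^{L\times L}$; I want to show $M\in\Pe$, i.e.\ that $M$ is symmetric, positive definite, and satisfies $\sigma(M)_i\geq\varepsilon^2$ for every $i\in\indexset[L]$. Symmetry is immediate: since $M_n^T=M_n$ for every $n$ and transposition is linear, hence continuous, on $\R^{L\times L}$, passing to the limit yields $M^T=M$.

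The key step is to phrase the eigenvalue condition in a form manifestly stable under limits. By the spectral theorem, for a symmetric matrix $N$ the statement ``$\sigma(N)_i\geq\varepsilon^2$ for all $i\in\indexset[L]$'' is equivalent to $z^TNz\geq\varepsilon^2\Vert z\Vert_2^2$ for all $z\in\R^L$ (the Rayleigh quotient bound). Now fix $z\in\R^L$: the map $N\mapsto z^TNz$ is linear, hence continuous, so from $z^TM_nz\geq\varepsilon^2\Vert z\Vert_2^2$ for all $n$ we obtain $z^TMz\geq\varepsilon^2\Vert z\Vert_2^2$ in the limit. Since $z$ was arbitrary, this inequality holds for every $z\in\R^L$. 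For $z\neq0$ it gives $z^TMz\geq\varepsilon^2\Vert z\Vert_2^2>0$, so $M\in\Pp$; and, $M$ being symmetric, the same inequality together with the Rayleigh characterization yields $\sigma(M)_L=\min_{\Vert z\Vert_2=1}z^TMz\geq\varepsilon^2$, whence $\sigma(M)_i\geq\sigma(M)_L\geq\varepsilon^2$ for all $i$ by the decreasing ordering of $\sigma$. Therefore $M\in\Pe$, proving that $\Pe$ is closed.

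The main obstacle here is essentially cosmetic rather than substantive: one must be careful to transfer the eigenvalue lower bound to the limit through the continuous quadratic forms $z\mapsto z^TM_nz$, and not through the eigenvalues and eigenvectors themselves, which a priori need not converge in a controlled way (one matrix entry perturbed slightly can swap the associated eigenvectors). An alternative would be to invoke continuity of the eigenvalue map $\sigma$ on the closed subspace of symmetric matrices together with closedness of $\{v\in\R^L\mid v_i\geq\varepsilon^2\ \forall i\in\indexset[L]\}$; I prefer the Rayleigh-quotient route because it keeps the argument self-contained and delivers positive definiteness of the limit along the way.
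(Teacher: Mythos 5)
Your proof is correct, but it takes a genuinely different route from the paper. The paper's argument is a two-line appeal to continuity: transposition is linear, the eigenvalues of a matrix depend continuously on its entries (cited from Ortega), and closed conditions defined by continuous equalities and non-strict inequalities pass to the limit --- i.e.\ exactly the ``alternative'' you sketch and set aside at the end. You instead avoid eigenvalue continuity altogether by recasting the spectral condition as the family of quadratic-form inequalities $z^TNz\geq\varepsilon^2\Vert z\Vert_2^2$ for all $z\in\R^L$, which is stable under limits because each map $N\mapsto z^TNz$ is linear, and then you recover $\sigma(M)_i\geq\varepsilon^2$ for the symmetric limit $M$ via the Rayleigh characterization $\sigma(M)_L=\min_{\Vert z\Vert_2=1}z^TMz$. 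What your route buys is self-containedness: it needs only the spectral theorem (essentially the paper's \cref{lem:estimate-2-norm}) rather than a citation for continuity of the eigenvalue map, and it delivers $M\in\Pp$ explicitly along the way; note also that your worry about eigenvector instability is well placed but immaterial for the paper's route, since only the (continuous) ordered eigenvalues, not eigenvectors, are used there. What the paper's route buys is brevity, and it generalizes immediately to any constraint expressed through continuous functions of the matrix entries. Both arguments are complete and compatible with the rest of the paper.
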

\begin{proof}
    The transpose depends continuously on the matrix as a linear function on a finite-dimensional space.
    Moreover, the eigenvalues of a matrix vary continuously with the matrix entries \cite[3.1.2]{Or90}.
    Then, the closedness of $\Pe$ follows since continuous equality and non-strict inequality conditions are preserved in the limit.
\end{proof}
\begin{lemma}\label{lem:estimate-2-norm}
    Let $M \in \R^{L\times L}$ be a symmetric, positive semi-definite matrix with largest eigenvalue $\sigma(M)_1$ and smallest eigenvalue $\sigma(M)_L$.
    Then, %
    \begin{equation*}
        \sigma(M)_L \Vert z \Vert_2^2 \leq z^T M z \leq \sigma(M)_1 \Vert z \Vert_2^2\ \text{ for all }z \in \R^L.
    \end{equation*}
\end{lemma}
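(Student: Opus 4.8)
The plan is to use the spectral theorem for real symmetric matrices to diagonalize the quadratic form $z^T M z$, reducing it to a weighted sum of squares whose weights are the eigenvalues of $M$, and then to squeeze that sum between its extreme weights.

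First I would invoke the spectral theorem: since $M$ is symmetric, there is an orthogonal matrix $Q \in \R^{L\times L}$ whose columns form an orthonormal basis of eigenvectors of $M$, such that $M = Q D Q^T$ with $D = \diag(\sigma(M)_1, \dots, \sigma(M)_L)$ and $\sigma(M)_1 \geq \dots \geq \sigma(M)_L$ by the ordering convention fixed for $\sigma$ in \eqref{eq:def-eigenvalue-operator}. Next, for an arbitrary $z \in \R^L$ I would set $c := Q^T z$. Because $Q$ is orthogonal, $\Vert c \Vert_2 = \Vert z \Vert_2$, and hence $z^T M z = z^T Q D Q^T z = c^T D c = \sum_{i=1}^L \sigma(M)_i\, c_i^2$. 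Using $\sigma(M)_L \leq \sigma(M)_i \leq \sigma(M)_1$ for every $i \in \indexset[L]$, this sum is bounded below by $\sigma(M)_L \sum_{i=1}^L c_i^2 = \sigma(M)_L \Vert z \Vert_2^2$ and above by $\sigma(M)_1 \sum_{i=1}^L c_i^2 = \sigma(M)_1 \Vert z \Vert_2^2$, which is exactly the claimed double inequality.

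There is essentially no genuine obstacle here: the only point requiring care is that the change of variables $c = Q^T z$ preserves the Euclidean norm, which is precisely what the orthogonality of $Q$ guarantees and what makes the substitution transparent. I would also remark that positive semi-definiteness of $M$ is not actually needed for the inequality itself; it only ensures $\sigma(M)_L \geq 0$, so that both bounds — and $z^T M z$ itself — are nonnegative, which is the regime in which the lemma will later be applied.
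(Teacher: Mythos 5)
Your proof is correct and takes the same route as the paper, which simply states that the lemma follows from the principal axis theorem, i.e., exactly the orthogonal diagonalization $M = QDQ^T$ you spell out. Your version just makes the change of variables $c = Q^T z$ and the squeezing of the weighted sum of squares explicit, and your remark that positive semi-definiteness is not needed for the inequality itself is accurate.
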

\noindent
The lemma follows from the principal axis theorem.
\begin{theorem}\label{th:ms-existence-minimizer}
    The functional $J_\varepsilon$, cf. \eqref{eq:definition-J-epsilon}, has a minimizer in $X_\varepsilon$.
\end{theorem}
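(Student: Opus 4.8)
The plan is to run the direct method exactly along the four steps sketched above; the role of the $\varepsilon$-regularization is decisive twice, first in making $J_\varepsilon$ bounded below and then in keeping the covariance estimates bounded along a minimizing sequence. For Step 1, note that $\Vert g(x)-\mu_l\Vert_{\Sigma_l^{-1},\eta}\ge\sqrt{\eta}>0$ and $\perIm{\OO_l}\ge0$, while every $\Sigma_l\in\Pe$ has $\det\Sigma_l=\prod_i\sigma(\Sigma_l)_i\ge\varepsilon^{2L}$, so $\log\det\Sigma_l\ge2L\log\varepsilon$ and, since $\lebesgue{\OO_l}\le\lebesgue{\imdom}<\infty$, each of the $k$ summands of $J_\varepsilon$ is bounded below by a constant $\beta\in\R$ depending only on $\imdom$, $L$, $\varepsilon$; hence $\underline{J_\varepsilon}\ge k\beta>-\infty$. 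Fixing a minimizing sequence $(\bOO_n,\bmu_n,\bSigma_n)_n\subseteq X_\varepsilon$ (Step 2), we have $J_\varepsilon[\bOO_n,\bmu_n,\bSigma_n]\le C$ for some $C$ and all $n$, so together with the per-summand lower bound each of the $k$ summands of $J_\varepsilon[\bOO_n,\bmu_n,\bSigma_n]$ is bounded above by a constant $\bar C$, uniformly in $n$. Consequently, for every $l$, the perimeters $\perIm{\OO_{l,n}}$, the numbers $\lebesgue{\OO_{l,n}}\log\det\Sigma_{l,n}$, and the integrals $\int_{\OO_{l,n}}\Vert g(x)-\mu_{l,n}\Vert_{\Sigma_{l,n}^{-1},\eta}\dx$ are bounded, uniformly in $n$.

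\emph{Compactness (Step 3).} Passing to a subsequence (not relabelled) we may assume $\lebesgue{\OO_{l,n}}\to a_l\in[0,\lebesgue{\imdom}]$ for every $l$. Since $\chi_{\OO_{l,n}}$ is bounded in $\bv$ (its seminorm equals $\perIm{\OO_{l,n}}$ and $\Vert\chi_{\OO_{l,n}}\Vert_{L^1(\imdom)}=\lebesgue{\OO_{l,n}}\le\lebesgue{\imdom}$), the compactness theorem in $\bv$ (cf.\ \cite{AmFuPa00}) yields, after a further subsequence, measurable sets $\OO_l^*\subseteq\imdom$ with $\chi_{\OO_{l,n}}\to\chi_{\OO_l^*}$ in $L^1(\imdom)$ and a.e., i.e.\ $\OO_{l,n}\to\OO_l^*$ in measure; by lower semicontinuity of the perimeter $\perIm{\OO_l^*}\le\liminf_n\perIm{\OO_{l,n}}<\infty$, and $\lebesgue{\OO_l^*}=a_l$ by \cref{lem:convergence-of-measures}. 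For each $l$ with $a_l>0$ we have $\lebesgue{\OO_{l,n}}\ge a_l/2$ for $n$ large, so $\log\det\Sigma_{l,n}$ is bounded above; combined with $\sigma(\Sigma_{l,n})_i\ge\varepsilon^2$ this forces $\sigma(\Sigma_{l,n})_1$, the spectral norm of the symmetric positive definite matrix $\Sigma_{l,n}$, to be bounded, so $\Sigma_{l,n}$ stays in a bounded subset of $\Pe$ and, by \cref{lem:pe-closed}, a further subsequence converges to some $\Sigma_l^*\in\Pe$ — this is the second, essential, use of the $\varepsilon$-regularization. For the same $l$, \cref{lem:estimate-2-norm} gives $z^T\Sigma_{l,n}^{-1}z\ge\Vert z\Vert_2^2/\sigma(\Sigma_{l,n})_1$ with $z=g(x)-\mu_{l,n}$, so $\int_{\OO_{l,n}}\Vert g(x)-\mu_{l,n}\Vert_2\dx$ is bounded; using the reverse triangle inequality, $g\in L^\infty$, and $\lebesgue{\OO_{l,n}}\ge a_l/2$, this bounds $\Vert\mu_{l,n}\Vert_2$, so a further subsequence converges to some $\mu_l^*\in\R^L$. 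For indices $l$ with $a_l=0$ neither $\mu_{l,n}$ nor $\Sigma_{l,n}$ can be controlled, nor need they be: set $\OO_l^*:=\emptyset$ (compatible with $\lebesgue{\OO_l^*}=a_l=0$), $\mu_l^*:=0$, $\Sigma_l^*:=\varepsilon^2 I\in\Pe$. Finally, since $\sum_{l=1}^k\chi_{\OO_{l,n}}=1$ on $\imdom$ and all $L^1$-limits exist, $\sum_{l=1}^k\chi_{\OO_l^*}=1$ a.e.; modifying the sets $\OO_l^*$ on a common null set (which affects neither the perimeters nor any of the integrals) turns them into a partition, so $\bOO^*=(\OO_1^*,\dots,\OO_k^*)\in\AAA$ and $(\bOO^*,\bmu^*,\bSigma^*)\in X_\varepsilon$.

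\emph{Lower semicontinuity and conclusion (Steps 3--4).} For $l$ with $a_l>0$: $\perIm{\OO_l^*}\le\liminf_n\perIm{\OO_{l,n}}$; $\lebesgue{\OO_{l,n}}\log\det\Sigma_{l,n}\to\lebesgue{\OO_l^*}\log\det\Sigma_l^*$ by continuity of $\det$ and $\log$ on $\Pp$ and $\lebesgue{\OO_{l,n}}\to\lebesgue{\OO_l^*}$; and, since $\Sigma_{l,n}^{-1}\to(\Sigma_l^*)^{-1}$ and $\mu_{l,n}\to\mu_l^*$, the nonnegative integrands $\chi_{\OO_{l,n}}(x)\Vert g(x)-\mu_{l,n}\Vert_{\Sigma_{l,n}^{-1},\eta}$ converge a.e.\ to $\chi_{\OO_l^*}(x)\Vert g(x)-\mu_l^*\Vert_{(\Sigma_l^*)^{-1},\eta}$, so Fatou's lemma gives $\int_{\OO_l^*}\Vert g(x)-\mu_l^*\Vert_{(\Sigma_l^*)^{-1},\eta}\dx\le\liminf_n\int_{\OO_{l,n}}\Vert g(x)-\mu_{l,n}\Vert_{\Sigma_{l,n}^{-1},\eta}\dx$. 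For $l$ with $a_l=0$ the $l$-th summand at $(\bOO^*,\bmu^*,\bSigma^*)$ is $0$, whereas along the sequence it is at least $\lebesgue{\OO_{l,n}}\log\det\Sigma_{l,n}\ge\lebesgue{\OO_{l,n}}\,2L\log\varepsilon\to0$, so its $\liminf$ is $\ge0$. Summing over $l$ and using superadditivity of $\liminf$ yields \eqref{eq:lower-semicontinuity-minimizing-sequence}, and the chain \eqref{eq:chain-of-inequalities} identifies $(\bOO^*,\bmu^*,\bSigma^*)$ as a minimizer.

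\emph{Main obstacle.} The genuine difficulty is the possible degeneration of segments: on a segment whose Lebesgue measure tends to $0$ the functional gives no a priori control on the corresponding mean and covariance estimates; this is handled by the case distinction on $\lim_n\lebesgue{\OO_{l,n}}$ together with assigning harmless limit values on vanishing segments, and by checking that the measure-theoretic limit sets, after a null-set modification, still form an admissible partition in $\AAA$. Boundedness of the covariance estimates on the non-degenerate segments, in turn, hinges on pairing the upper bound on $\lebesgue{\OO_{l,n}}\log\det\Sigma_{l,n}$ with the eigenvalue floor $\varepsilon^2$, which is precisely where the $\varepsilon$-regularization is indispensable.
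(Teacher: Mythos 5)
Your proof is correct and follows essentially the same route as the paper: the direct method with the $\varepsilon$-floor yielding both the lower bound and the spectral-norm bound on $\Sigma_{l,n}$, BV compactness and lower semicontinuity of the perimeter for the sets, a case split on the limiting measure of each segment, boundedness of $\mu_{l,n}$ via \cref{lem:estimate-2-norm} and the reverse triangle inequality, and Fatou's lemma for the data term. Your treatment of the degenerate segments (setting $\OO_l^*=\emptyset$, $\mu_l^*=0$, $\Sigma_l^*=\varepsilon^2 I$ and bounding the $l$-th summand below by $\lebesgue{\OO_{l,n}}\,2L\log\varepsilon\to 0$) is a mild streamlining of the paper's additional sub-case distinction on whether $\log\det\Sigma_{n,l}$ is bounded above, but the argument is the same in substance.
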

\begin{proof}
    We use the \emph{direct method in the calculus of variations}.%

    To show that $J_\varepsilon$ has a lower bound, we rewrite the functional
    \begin{equation}\label{eq:functional-only-first-integral}
        J_\varepsilon[\bOO, \bmu, \bSigma]
        = \sum_{l=1}^k \left[
            \int_{\OO_l} \Vert g(x) - \mu_l \Vert_{\Sigma_l^{-\!1}\!\!, \eta} \dx + \lebesgue{\OO_l} \log \det \Sigma_l
            + \lambda \perIm{\OO_l}\right].
    \end{equation}
    For each $l\in \{1,\dots, k\}$, we have
        $\Vert g(x) - \mu_l \Vert_{\Sigma_l^{-\!1}\!\!, \eta} > 0$
    because $\eta > 0$ and $\Sigma_l$ is positive definite, implying that its inverse is positive definite, too.
    Moreover, it holds
    \begin{equation}\label{eq:lower-bound-log-det-sigma}
        \lebesgue{\OO_l} \log \det \Sigma_l = \lebesgue{\OO_l}\log \prod_{i=1}^{L} \sigma(\Sigma_l)_i \geq \lebesgue{\OO_l} \log \prod_{i=1}^{L} \varepsilon^2 = \lebesgue{\OO_l} \log\varepsilon^{2L} > -\infty,
    \end{equation}
    because $\Sigma_l \in \Pe$, the function $\log$ is monotonically increasing and for $\lebesgue{\OO_l}$ we have $\infty > \lebesgue{\imdom} \geq \lebesgue{\OO_l} \geq 0$ since $\lebesgue{\cdot}$ is a measure and $\OO_l \subseteq \imdom$.
    The last term in \eqref{eq:functional-only-first-integral}, namely $\lambda \perIm{\OO_l} = \lambda \tv{\chi_{\OO_l}}$, is non-negative because $\lambda > 0$ and $\tv{\chi_{\OO_l}}\geq 0$.
    Hence, we obtain
    \begin{equation*}
        J_\varepsilon[\bOO, \bmu, \bSigma] \geq \underline{c}\ \text{ for all } \left(\bOO, \bmu, \bSigma\right) \in X_\varepsilon,
    \end{equation*}
    for some $\underline{c} \in \R$ with $\underline{c} > - \infty$.
    This implies the existence of a minimizing sequence 
    \begin{equation*}
        \left(\bOO_n, \bmu_n, \bSigma_n \right)_{n\in\N} \subseteq X_\varepsilon.
    \end{equation*}
    In particular, the evaluations of the minimizing sequence are bounded as a convergent sequence of real numbers, i.e., there is a constant $\overline{C} < \infty$ such that
    \begin{equation}\label{eq:min-sequence-bounded-evals}
        J_\varepsilon[\bOO_n, \bmu_n, \bSigma_n] \leq \overline{C}\text{ for all }n\in\N.
    \end{equation}
    Please note that we will need that $\overline{C}$ also bounds the summands inside the parentheses in \eqref{eq:functional-only-first-integral} for every $l\in \indexset[k]$ separately.
    However, if $\varepsilon < 1$, the term $\log\det \Sigma_l$ can be negative.
    In that case, $\overline{C}$ has to be chosen sufficiently large, e.g., by adding $\lebesgue{\imdom}|\log\varepsilon^{2L}|$, to fulfill that the summands are bounded above separately by $\overline{C}$.

    We now construct a subsequence of the minimizing sequence and an element $(\bOO^*, \bmu^*, \bSigma^*) \in X_\varepsilon$ that fulfill \eqref{eq:lower-semicontinuity-minimizing-sequence}.
    Please note that every subsequence that we consider on our way to the final subsequence will be again denoted by
        $\left(\bOO_n, \bmu_n, \bSigma_n \right)_{n\in\N}$
    for ease of notation.
    The procedure to arrive at the sought subsequence of the minimizing sequence is to go through $\left(\bOO_n\right)_{n \in \N}$, $\left(\bSigma_n \right)_{n\in\N}$ and $\left(\bmu_n\right)_{n \in \N}$ component-wise and show how to choose a subsequence based on the respective component satisfying the requirements. That is, we choose a subsequence of $\left(\bOO_n, \bmu_n, \bSigma_n \right)_{n\in\N}$ such that $(\OO_{n,1})_{n\in\N}$ meets our requirements.
    From that new sequence $\left(\bOO_n, \bmu_n, \bSigma_n \right)_{n\in\N}$, we choose a subsequence such that $(\OO_{n,2})_{n\in\N}$ behaves as desired.
    We continue choosing subsequences until $(\OO_{n,k})_{n\in\N}$ fulfills the requirements, meaning that we are done with the $\bOO_n$ part.
    In the same manner, we go through $\left(\bSigma_n \right)_{n\in\N}$ and $\left(\bmu_n\right)_{n \in \N}$ and arrive at a subsequence $\left(\bOO_n, \bmu_n, \bSigma_n \right)_{n\in\N}$ for which the requirements for all components are met.

    The first step is to show that the sequence $(\OO_{n,l})_{n\in\N}$ as part of our minimizing sequence is bounded for each $l \in \indexset[k]$.
    To this end, we consider the sequence of characteristic functions 
    $
        (\chi_{\OO_{n,l}})_{n\in\N} \subseteq \bv,
    $
    which is in $\bv$ since $\bOO_n = (\OO_{n,1}, \dots, \OO_{n,k}) \in \AAA$.
    First, $\partial \Omega$ is compact because it is bounded due to the boundedness of $\Omega$ and also closed since %
        $\partial \Omega = \overline{\Omega} \cap \overline{\R^d \setminus \Omega}$.
    Since $\Omega$ also has Lipschitz boundary, $\Omega$ is an extension domain by \cite[Prop. 3.21]{AmFuPa00}.
    Furthermore, $(\chi_{\OO_{n,l}})_{n\in\N}$ is bounded for every $l\in \indexset[k]$ because for every $n\in\N$, we find
    \begin{equation*}
    \begin{split}
        \Vert \chi_{\OO_{n,l}} \Vert_{\bv} &= \Vert \chi_{\OO_{n,l}} \Vert_{L^1(\imdom)} + \tv{\chi_{\OO_{n,l}}}\\
        &= \int_{\imdom} \vert \chi_{\OO_{n,l}}(x) \vert \dx + \perIm{\OO_{n,l}}
        \leq \lebesgue{\imdom} + \frac{\overline{C}}{\lambda}
        < \infty,
    \end{split}
    \end{equation*}
    as $\vert \chi_{\OO_{n,l}}(x) \vert \leq 1$ and $\lambda \perIm{\OO_{n,l}} \leq \overline{C}$ since $\OO_{n,l}$ is part of the minimizing sequence.
    Due to the boundedness of $\Vert \chi_{\OO_{n,l}} \Vert_{\bv}$ and $\Omega$ being a bounded extension domain, \cite[Th. 3.23]{AmFuPa00} guarantees the existence of a subsequence $(\chi_{\OO_{n,l}})_{n\in\N}$ that converges weakly* in the sense of \cite[Def. 3.11]{AmFuPa00} with limit $u_l^* \in \bv$.
    In particular, this implies that this sequence converges strongly in $L^1(\imdom)$ to the same limit.
    Moreover, we choose a subsequence that converges even point-wise a.e. to $u_l^*$, cf. %
    \cite[Lemma 3.22 (1)]{Al16}.
    This ensures that $u_l^*(x) \in \{0,1\}$ for almost every $x\in\imdom$ because $\chi_{\OO_{n,l}}(x) \to u_l^*(x)$ in $\R$ and $\chi_{\OO_{n,l}}(x) \in \{0,1\}$ for every $n\in\N$.
    Moreover, $\OO_l^*:=(u_l^*)^{-1}(\{1\})$ is measurable because $u_l^*$ is measurable as a function in $\bv$ and therefore the preimage of the measurable set $\{1\} \in \mathcal{B}(\R)$ under $u_l^*$ is also measurable. Furthermore, we have $\chi_{\OO_l^*}=u_l^*$.

    To see that $\bOO^* = (\OO_1^*, \dots, \OO_k^*)$ is a partition of $\Omega$, note that
    \begin{equation*}
        S := \left\{(a_1, \dots, a_k) \in \R^k \; \middle| \; {\textstyle\sum\nolimits_{l=1}^{k}} a_l = 1\right\}
    \end{equation*}
    is closed.
    Since we have chosen a subsequence $\left(\bOO_n, \bmu_n, \bSigma_n \right)_{n\in\N}$ such that the sequence $(\chi_{\OO_{n,l}})_{n\in\N}$ converges point-wise a.e. for every $l \in \indexset[k]$, we have for almost every $x \in \Omega$ a sequence $(\chi_{\OO_{n,1}}(x), \dots, \chi_{\OO_{n,k}} (x))_{n\in\N}$ in $S$ with limit $(\chi_{\OO_{1}^*}(x), \dots, \chi_{\OO_{k}^*} (x))$.
    It follows that $(\chi_{\OO_{1}^*}(x), \dots, \chi_{\OO_{k}^*} (x)) \in S$ for almost every $x \in \Omega$ by the closedness of $S$. In this sense, the partition property of $\bOO^*$ holds almost everywhere.
    In other words, the limit $\bOO^*$ fulfills the required properties to belong to $\AAA$ only \emph{almost everywhere} and resides therefore in the set
    \begin{equation*}
    \begin{split}
        \AAA' := &\left\{
            \left( \OO_{1}, \OO_{2}, \dots, \OO_{k}\right) \; \middle| \; \forall l\in \indexset[k]: \OO_{l} \subseteq \imdom \text{ measurable}, \vphantom{{\textstyle\bigcup\nolimits_{l=1}^k}}\right.\\
            &\left. \hphantom{\left\{\right.} \perIm{\OO_{l}} < \infty, \ \forall m \neq l: \lebesgue{\OO_{l} \cap \OO_{m}} = 0, \lebesgue{\imdom \setminus {\textstyle\bigcup\nolimits_{l=1}^k} \OO_{l}} = 0
        \right\},
    \end{split}
    \end{equation*}
    which is a superset of $\AAA$.
    Those $x \in \imdom$ without guaranteed point-wise convergence are contained in a null set.
    Thus, we can change $\bOO^*$ such that the properties necessary to belong to $\AAA$ are fulfilled as this does not change the value of $J_\varepsilon$.
    Hence, we can assume that $\bOO^* \in \AAA$.
    Moreover, we have the equivalence of local convergence in measure and convergence in measure in $\imdom$ because $\imdom$ is bounded \cite[Rem. 3.37]{AmFuPa00}.
    Thus, $\OO \mapsto \perIm{\OO}$ is lower semi-continuous w.r.t. convergence in measure in $\imdom$ by \cite[Prop. 3.38 (b)]{AmFuPa00}.
    Consequently, the chosen $(\bOO_{n})_{n\in\N}$ and $\bOO^*$ satisfy
    \begin{equation}\label{eq:subsequence-lower-semicontinuity-perimeter}
        \perIm{\OO_l^*} \leq \liminf_{n\to\infty} \perIm{\OO_{n,l}}\text{ for each }l\in \indexset[k].
    \end{equation}
    In order to obtain subsequences $(\bmu_n)_{n\in\N} \subseteq (\R^L)^k$ and $(\bSigma_n)_{n\in\N} \subseteq (\Pe)^k$ that satisfy our requirements, we have to distinguish for every $l\in \indexset[k]$ the cases (1) that in the limit, segment $l$ has measure greater $0$, i.e., $\lebesgue{\OO_l^*} > 0$; and (2) that in the limit, segment $l$ has measure $0$, i.e., $\lebesgue{\OO_l^*} = 0$.

    Case (1), i.e. $\lebesgue{\OO_l^*} > 0$: Because we have a minimizing sequence, it holds
    \begin{equation*}
        \lebesgue{\OO_{n,l}} \log \det \Sigma_{n,l} \leq \overline{C}\text{ for all }n\in\N.
    \end{equation*}
    Moreover, because of $\Sigma_{n,l} \in \Pe$, we find for every $n\in \N$ the estimate
    \begin{equation}\label{eq:estimate-log-det-Sigma}
        \begin{split}
            \log \det \Sigma_{n,l}
            &= \log {\textstyle\prod_{i=1}^{L}} \sigma(\Sigma_{n,l})_i\\
            &\geq \log \left(\sigma(\Sigma_{n,l})_1\left[\sigma(\Sigma_{n,l})_L\right]^{L-1}\right)
            \geq \log \sigma(\Sigma_{n,l})_1 + \log \varepsilon^{2(L - 1)}.
        \end{split}
    \end{equation}
    Since $(\chi_{\OO_{n,l}})_{n\in\N}\rightarrow\chi_{\OO_l^*}$ in $L^1(\imdom)$ and
    because the convergence of the characteristic functions in $L^1(\imdom)$ corresponds to the convergence of sets in measure which implies the convergence of the measures (cf. \cref{lem:convergence-of-measures}),
    we also have $\lebesgue{\OO_{n,l}}\to \lebesgue{\OO_l^*}=:c_l$.
    Thus, for fixed $\delta \in (0, c_l)$, there is $N_l \in \N$ such that for $n \geq N_l$, we have
    \begin{equation}\label{eq:bounds-measure-segment-l}
        0 < c_l - \delta \leq \lebesgue{\OO_{n,l}} \leq c_l + \delta.
    \end{equation}
    By going to another subsequence (discarding the first $N_l-1$ elements), we achieve that \eqref{eq:bounds-measure-segment-l} holds for all $n\in\mathbb{N}$.
    If $\log \det \Sigma_{n,l} \geq 0$, we find with \eqref{eq:estimate-log-det-Sigma}
    \begin{equation*}
    \begin{split}
        \overline{C} &\geq \lebesgue{\OO_{n,l}} \log \det \Sigma_{n,l}
        \geq (c_l - \delta) \log \sigma(\Sigma_{n,l})_1 + (c_l - \delta) \log \varepsilon^{2(L - 1)}\\
        \Rightarrow\ &\tfrac{\overline{C}}{c_l - \delta} - \log \varepsilon^{2(L - 1)} \geq \log \sigma(\Sigma_{n,l})_1\\
        \Rightarrow\ &\infty > \exp\left(\tfrac{\overline{C}}{c_l - \delta} - \log \varepsilon^{2(L - 1)}\right) \geq \sigma(\Sigma_{n,l})_1.
    \end{split}
    \end{equation*}
    If $\log \det \Sigma_{n,l} < 0$, we start with
    \begin{equation*}
        \overline{C} \geq \lebesgue{\OO_{n,l}} \log \det \Sigma_{n,l}
        \geq (c_l + \delta) \log \det \Sigma_{n,l},
    \end{equation*}
    and derive analogously
    \begin{equation*}
        \tfrac{\overline{C}}{c_l + \delta} - \log \varepsilon^{2(L - 1)} \geq \log \sigma(\Sigma_{n,l})_1\ 
        \Rightarrow\ \infty > \exp\left(\tfrac{\overline{C}}{c_l + \delta} - \log \varepsilon^{2(L - 1)}\right) \geq \sigma(\Sigma_{n,l})_1.
    \end{equation*}
    Hence, $(\Sigma_{n,l})_{n\in\N}$ is bounded in the spectral norm $\Vert \cdot \Vert_{2\to 2}$.
    In particular, this implies that there exists an $r > 0$ such that
    \begin{equation*}
        (\Sigma_{n,l})_{n\in\N} \subseteq \overline{B_r^{\Vert \cdot \Vert_{2\to 2}}(0)} \cap \Pe \subseteq \R^{L\times L},
    \end{equation*}
    with $\overline{B_r^{\Vert \cdot \Vert_{2\to 2}}(0)}$ being the closure of the ball with radius $r$ around the origin with respect to the spectral norm.
    The intersection of the closed ball and $\Pe$ (which is closed, cf. \cref{lem:pe-closed}) is bounded and closed.
    By the Heine-Borel theorem, it is compact as a subset of a finite-dimensional space.
    Consequently, we can choose a convergent subsequence $(\Sigma_{n,l})_{n\in\N} \subseteq \Pe$ that converges strongly to $\Sigma_l^* \in \Pe$.

    Now, we consider $(\mu_{n,l})_{n\in\N} \subseteq \R^L$.
    By the reverse triangle inequality, we get
    \begin{equation*}
        \Vert \mu_{n,l} - g(x)\Vert_2 \geq |\Vert \mu_{n,l} \Vert_2 - \Vert g(x) \Vert_2|\geq \Vert \mu_{n,l} \Vert_2 - \Vert g(x) \Vert_2,
    \end{equation*}
    for all $x \in \imdom$. Moreover, the eigenvalues of $\Sigma_{n,l}^{-1}$ are the reciprocals of the eigenvalues of $\Sigma_{n,l}$.
    Since $(\mu_{n,l})_{n\in\N}$ is part of the minimizing sequence, we have
    \begin{align*}
        \overline{C} \geq{}& \int_{\OO_{n,l}} \Vert g(x) - \mu_{n,l} \Vert_{\Sigma_{n,l}^{-\!1}, \eta} \dx\\
        \geq{}& \int_{\OO_{n,l}} \sqrt{\left(g(x) - \mu_{n,l}\right)^T \Sigma_{n,l}^{-1} \left(g(x) - \mu_{n,l}\right)} \dx\\
        \prescript{\text{\cref{lem:estimate-2-norm}}}{}{\geq}~& \int_{\OO_{n,l}} \sqrt{\tfrac{1}{\sigma(\Sigma_{n,l})_1}} \; \Vert g(x) - \mu_{n,l} \Vert_2 \dx
        \\
        \geq{}& \sqrt{\tfrac{1}{\sigma(\Sigma_{n,l})_1}} \biggl(\int_{\OO_{n,l}} \Vert \mu_{n,l} \Vert_2 \dx -  \int_{\OO_{n,l}} \Vert g(x) \Vert_2 \dx\biggr)\\
        \geq{}& \sqrt{\tfrac{1}{\sigma(\Sigma_{n,l})_1}} \biggl(\lebesgue{\OO_{n,l}} \; \Vert \mu_{n,l} \Vert_2 - \lebesgue{\OO_{n,l}} \; \Vert \Vert g \Vert_2 \Vert_{L^\infty(\OO_{n,l})}\biggr)\\
        \geq{}& \sqrt{\tfrac{1}{\sigma(\Sigma_{n,l})_1}} \biggl(\lebesgue{\OO_{n,l}} \; \Vert \mu_{n,l} \Vert_2 - \lebesgue{\imdom} \; \Vert \Vert g \Vert_2 \Vert_{L^\infty(\imdom)}\biggr).
    \end{align*}
    As a consequence, we find
    \begin{equation}\label{eq:upper-bound-lambda-mu}
        \sqrt{\sigma(\Sigma_{n,l})_1} \; \overline{C} + \lebesgue{\imdom} \; \Vert \Vert g \Vert_2 \Vert_{L^\infty(\imdom)} \geq \lebesgue{\OO_{n,l}} \; \Vert \mu_{n,l} \Vert_2.
    \end{equation}
    The left-hand side is bounded from above because $\imdom$ is bounded, $g \in L^\infty(\imdom, \R^L)$ and $\Vert \Sigma_{n,l} \Vert_{2\to 2}\leq\tilde C < \infty$ for all $n\in\N$ and a constant $\tilde{C} \in \R$, as shown above.
    Combining \eqref{eq:bounds-measure-segment-l} and \eqref{eq:upper-bound-lambda-mu}, we get
    \begin{equation*}
        \sqrt{\tilde{C}} \; \overline{C} + \lebesgue{\imdom} \; \Vert \Vert g \Vert_2 \Vert_{L^\infty(\imdom)}
        \geq \lebesgue{\OO_{n,l}} \; \Vert \mu_{n,l} \Vert_2
        \geq (c_l - \delta) \, \Vert \mu_{n,l} \Vert_2,
    \end{equation*}
    and the boundedness of $(\mu_{n,l})_{n\in\N}$ follows from division by $c_l - \delta$.
    With the same reasoning as before, using the Heine-Borel theorem, we can choose a strongly convergent subsequence with limit $\mu_l^* \in \R^L$.

    In order to show the inequality \eqref{eq:lower-semicontinuity-minimizing-sequence} for the $l$-th summand, we first note that the transpose %
    and inverse of a matrix vary continuously with their entries and that square root, determinant and logarithm are continuous functions.
    Furthermore, we use \eqref{eq:subsequence-lower-semicontinuity-perimeter} and that the sum and the product of functions of point-wise a.e. convergent sequences converge point-wise a.e., too.
    Since $(\OO_{n,l})_{n\in\N}$, $(\mu_{n,l})_{n\in\N} \subseteq \R^L$ and $(\Sigma_{n,l})_{n\in\N} \subseteq \Pe$ converge to $\OO_l^*$ (in measure and point-wise a.e. as characteristic functions), $\mu_{l}^* \in \R^L$ and $\Sigma_l^* \in \Pe$, respectively, we get for the $l$-th summand of the outer sum of $J_\varepsilon$ \eqref{eq:definition-J-epsilon}
    \begin{align}\label{eq:inequality-liminf-convergent-sequences}
        &\int_{\OO_l^*} \Vert g(x) - \mu_l^* \Vert_{\left(\Sigma_l^*\right)^{-\!1}\!\!, \eta} + \log \det \Sigma_l^* \dx + \lambda \perIm{\OO_l^*}\notag\\
        =& \int_{\imdom} \chi_{\OO_l^*}(x) \left(\Vert g(x) - \mu_l^* \Vert_{\left(\Sigma_l^*\right)^{-\!1}\!\!, \eta} + \log \det \Sigma_l^* \right) \dx + \lambda \perIm{\OO_l^*}\notag\\
        =& \int_{\imdom} \lim_{n\to\infty} \chi_{\OO_{n,l}}(x) \left(\Vert g(x) - \mu_{n,l} \Vert_{\Sigma_{n,l}^{-\!1}, \eta} + \log \det \Sigma_{n,l} \right) \dx
        + \lambda \perIm{\OO_l^*}\notag\\
        \leq& \liminf_{n\to\infty} \int_{\imdom} \chi_{\OO_{n,l}}(x) \left( \Vert g(x) - \mu_{n,l} \Vert_{\Sigma_{n,l}^{-\!1}, \eta}+ \log \det \Sigma_{n,l} \right) \dx \notag\\
        &+ \liminf_{n\to\infty}\lambda \perIm{\OO_{n,l}}\notag\\
        \leq& \liminf_{n\to\infty} \int_{\OO_{n,l}} \Vert g(x) - \mu_{n,l} \Vert_{\Sigma_{n,l}^{-\!1}, \eta} + \log \det \Sigma_{n,l} \dx + \lambda \perIm{\OO_{n,l}}.
    \end{align}
    The second equality uses that the integrand converges point-wise a.e. due the convergence properties shown before.
    The penultimate inequality follows from \eqref{eq:subsequence-lower-semicontinuity-perimeter} and Fatou's lemma \cite[Th. 1.20]{AmFuPa00}, which holds, in particular, for measurable and possibly negative functions that are bounded below by a function in $L^1(\imdom)$. Such an integrable lower bound exists due to the boundedness of $\Omega$ and $\log \det \Sigma_{n,l} \geq \log\varepsilon^{2L}$, cf. \eqref{eq:lower-bound-log-det-sigma}.

    Summing up the above, for $l\in \indexset[k]$ such that $\lebesgue{\OO_l^*} > 0$, we have constructed a convergent subsequence of the minimizing sequence and obtained elements $\OO_l^*$, $\mu_l^*$ and $\Sigma_l^*$ as the limit of the sequence that fulfill the inequality stated in \eqref{eq:lower-semicontinuity-minimizing-sequence}.

    We consider now case (2):
    First, for any $n\in\mathbb{N}$ with $\lebesgue{\OO_{n,l}} = 0$, we set $\mu_{n,l} := 0 \in \R^L$ and $\Sigma_{n,l} := I_L$,
    where $I_L$ denotes the identity matrix in $\R^{L \times L}$. Note that this does not change the value of $J_\varepsilon$ and the sequence keeps all properties shown before.
    We start by considering the second summand in \eqref{eq:functional-only-first-integral}.
    Since $\lebesgue{\OO_{n,l}} \to 0$ and $\log \det \Sigma_{n,l} \geq 2L \log \varepsilon$ for all $n\in\N$, cf. \eqref{eq:lower-bound-log-det-sigma}, two cases have to be considered:
    \begin{enumerate}
    \item If $\log \det \Sigma_{n,l}$ is bounded from above, i.e., $\log \det \Sigma_{n,l} \leq \alpha$ for some $\alpha < \infty$, it follows with \eqref{eq:estimate-log-det-Sigma}
    \begin{align*}
        \alpha &\geq \log \det \Sigma_{n,l}
        \geq \log \sigma(\Sigma_{n,l})_1 + \log \varepsilon^{2(L-1)}\\
        \Rightarrow\ \infty &> \exp\left[\alpha - \log \varepsilon^{2(L-1)}\right] \geq \sigma(\Sigma_{n,l})_1.
    \end{align*}
    Hence, in this case $(\Sigma_{n,l})_{n\in\N}$ is bounded in the spectral norm and therefore, by the Heine-Borel theorem, we can choose a convergent subsequence with limit $\Sigma_{l}^* \in \Pe$.
    As a consequence, we obtain
    \begin{align*}
        &\lebesgue{\OO_{l}^*} \log \det \Sigma_{l}^*
        = \int_{\imdom} \chi_{\OO_l^*}(x) \log \det \Sigma_{l}^* \dx\\
        &= \int_{\imdom} \lim_{n\to \infty} \chi_{\OO_{n,l}}(x)  \log \det \Sigma_{n,l} \dx
        \leq \liminf_{n\to \infty} \int_{\imdom} \chi_{\OO_{n,l}}(x) \log \det \Sigma_{n,l} \dx,
    \end{align*}
    again using point-wise a.e. convergence of the integrand and Fatou's lemma \cite[Th. 1.20]{AmFuPa00} for a sequence of functions with an integrable lower bound.
    \item If $\log \det \Sigma_{n,l}$ is not bounded from above, we do not have  control over $\Sigma_{n,l}$.
    Therefore, different from the typical use of the direct method, we do not try to find a convergent subsequence but just a (possibly non-convergent) subsequence $(\Sigma_{n,l})_{n\in\N}$ and an element $\Sigma_l^* \in \Pe$ such that \eqref{eq:lower-semicontinuity-minimizing-sequence} is fulfilled.
    Hence, if $\log \det \Sigma_{n,l}$ is not bounded from above, we choose a subsequence of $(\Sigma_{n,l})_{n\in\N}$ such that $\log \det \Sigma_{n,l}$ is monotonically increasing and non-negative (and diverging to $\infty$).
    Setting $\Sigma_l^* := \varepsilon^2 I_L \in \Pe$, we obtain
    \begin{equation*}
        \underbrace{\lebesgue{\OO_l^*}}_{=0} \log\det\Sigma_l^*
        =0 %
        \leq \liminf_{n\to \infty} \int_{\imdom} \underbrace{\chi_{\OO_{n,l}}(x)}_{\geq 0} \underbrace{\log\det\Sigma_{n,l}}_{\geq 0} \dx.
    \end{equation*}
    \end{enumerate}
    We will now consider the integral in \eqref{eq:functional-only-first-integral}.
    Also for $\left(\mu_{n,l}\right)_{n\in\N}$, we cannot hope to find a convergent subsequence in this case and will therefore only aim to satisfy  \eqref{eq:lower-semicontinuity-minimizing-sequence} with $\left(\mu_{n,l}\right)_{n\in\N}$ and an element $\mu_l^*\in \R^L$.
    Since the integrand is always positive by the positive definiteness of $\Sigma_{n,l}^{-1}$ and the positivity of $\eta$, the integral is obviously non-negative for every $n \in \N$.
    Combined with choosing $\mu_l^* := 0 \in \R^L$ and again using $|\OO_l^*|=0$, we obtain
    \begin{equation*}
        \int_{\OO_l^*} \Vert g(x) - \mu_l^* \Vert_{\left(\Sigma_l^*\right)^{-\!1}\!\!, \eta} \dx = 0
        \leq \liminf_{n\to \infty} \int_{\OO_{n,l}} \Vert g(x) - \mu_{n,l} \Vert_{\Sigma_{n,l}^{-\!1}, \eta} \dx.
    \end{equation*}
    Combining all considerations for the case $\lebesgue{\OO_l^*} = 0$ and using again \eqref{eq:subsequence-lower-semicontinuity-perimeter}, we get
    \begin{align*}
        &\int_{\OO_l^*} \Vert g(x) - \mu_l^* \Vert_{\left(\Sigma_l^*\right)^{-\!1}\!\!, \eta} \dx + \lebesgue{\OO_l^*} \log \det \Sigma_l^* + \lambda \perIm{\OO_l^*}\\
        \leq& \liminf_{n\to \infty} \int_{\OO_{n,l}} \Vert g(x) - \mu_{n,l} \Vert_{\Sigma_{n,l}^{-\!1}, \eta} \dx
        + \int_{\imdom} \chi_{\OO_{n,l}}(x) \log \det \Sigma_{n,l} \dx\\
        &+ \lambda \perIm{\OO_{n,l}}.
    \end{align*}
    In total, we got a minimizing sequence
    $
        (\bOO_{n}, \bmu_n, \bSigma_n)_{n\in\N} \subseteq X_\varepsilon
    $
    and 
    $
        (\bOO^*, \bmu^*, \bSigma^*) \in X_\varepsilon
    $
    with $\bOO^* = (\OO_1^*,\dots,\OO_k^*)$, $\bmu^* = (\mu_1^*, \dots, \mu_k^*)$ and $\bSigma^* = (\Sigma_1^*, \dots, \Sigma_k^*)$
    such that
    \begin{align*}
        &\int_{\OO_l^*} \Vert g(x) - \mu_l^* \Vert_{\left(\Sigma_l^*\right)^{-\!1}\!\!, \eta} \dx + \lebesgue{\OO_l^*} \log \det \Sigma_l^* + \lambda \perIm{\OO_l^*}\\
        \leq& \liminf_{n\to \infty} \int_{\OO_{n,l}} \Vert g(x) - \mu_{n,l} \Vert_{\Sigma_{n,l}^{-\!1}, \eta} \dx + \lebesgue{\OO_{n,l}} \log \det \Sigma_{n,l}
        + \lambda \perIm{\OO_{n,l}}
    \end{align*}
    is true for every $l \in \indexset[k]$.
    Using again that the sum of the limit inferiors of two sequences is smaller or equal to the limit inferior of the sum of the sequences, the inequality \eqref{eq:lower-semicontinuity-minimizing-sequence} is fulfilled.
    This implies \eqref{eq:chain-of-inequalities}, i.e., $(\bOO^*, \bmu^*, \bSigma^*)$ is a minimizer of $J_\varepsilon$.
\end{proof}

\subsection{Gamma convergence of \texorpdfstring{$J_\varepsilon$}{J epsilon} when \texorpdfstring{$\varepsilon \to 0$}{epsilon to zero}}\label{subsec:epsAMS-gamma-convergence}
In \cref{subsec:epsAMS-existence-minimizers}, we have seen that $\varepsilon$ is crucial in several parts of the proof of \cref{th:ms-existence-minimizer}; namely, to show that a lower bound of $J_\varepsilon$ exists, to choose subsequences of the minimizing sequence that fulfill \eqref{eq:lower-semicontinuity-minimizing-sequence}, and, most importantly, to achieve that we optimize w.r.t. $\Sigma_l$ over a closed set $\Pe$.
However, introduced to ensure that the inverse of $\Sigma_l$ exists, the parameter $\varepsilon$ is supposed to be chosen close to $0$ and the question arises what happens if $\varepsilon \to 0$.
The notion of \gammaconvergence{} helps answering this question.
In particular, we will show the \gammaconvergence{} of $J_\varepsilon$ to a \gammalimit{}.

Let $X := \AAA \times \left(\R^L\right)^k \times \Pp^k$.
We use the definition of \gammaconvergence{} given in \cite[Def. 1.5, Def. 1.45]{Br02} and also refer to \cite{Br02} for general information about \gammaconvergence{}.
It requires a notion of convergence for sequences in $X$, which we choose as follows.
\begin{definition}\label{def:gamma-conv-notion-convergence}
    We say that a sequence \emph{$(\bOO_n, \bmu_n, \bSigma_n)_{n \in \N} \subseteq X$ converges weakly* to $(\bOO, \bmu, \bSigma) \in X$} if for every $l \in \indexset[k]$ it holds
    \begin{enumerate}
        \item $\chi_{\OO_{n,l}} \toweakstar \chi_{\OO_l}$ in $\bv$ in the sense of \cite[Def. 3.11]{AmFuPa00},
        \item $\mu_{n,l} \to \mu_l$ in $\R^L$,
        \item $\Sigma_{n,l} \to \Sigma_{l}$ in $\R^{L \times L}$,
    \end{enumerate}
    when $n\to \infty$.
    We write: $(\bOO_n, \bmu_n, \bSigma_n) \toweakstar (\bOO, \bmu, \bSigma)$.
\end{definition}
\noindent
Please note that if $\imdom$ is sufficiently regular, weak* convergence in $\bv$ in the sense of \cite[Def. 3.11]{AmFuPa00} corresponds to weak* convergence in the usual sense \cite[Rem. 3.12]{AmFuPa00}.
In that case, \cref{def:gamma-conv-notion-convergence} describes, in fact, the usual weak* convergence in $X$.
Obviously, we have $X_\varepsilon \subseteq X$.
It is necessary to extend $J_\varepsilon$ for $\varepsilon > 0$ from $X_\varepsilon$ to $X$ as the domain and from $\R$ to $\RExt := \R\cup \{+\infty\}$ as the image.
From now on, $J_\varepsilon$ accepts any symmetric and positive definite matrix as input and penalizes it with a functional value of $\infty$ if this matrix is not in $\Pe$.
Hence, it is given as
\begin{align}
    \label{eq:extension-J-epsilon}
        J_{\varepsilon} \colon X &\to \RExt,\\
        (\bOO, \bmu, \bSigma) &\mapsto \sum_{l=1}^k \left[
            \int_{\OO_l} \Vert g(x) - \mu_l \Vert_{\Sigma_l^{-\!1}\!\!, \eta} + \log \det \Sigma_l \dx
            + \lambda \perIm{\OO_l} + I_{\Pe}(\Sigma_l)\right]\notag.
\end{align}
The function $I_{\Pe}(\Sigma_l)$ for the set $\Pe$ is defined as
\begin{equation*}
    I_{\Pe}(\Sigma_l) := 
        \begin{cases}
            0 &\quad \text{if } \Sigma_l \in \Pe,\\
            \infty &\quad \text{if } \Sigma_l \notin \Pe.
        \end{cases}
\end{equation*}
Apparently, \cref{th:ms-existence-minimizer} also ensures the existence of minimizers for the extended $J_\varepsilon$.

To establish the \gammaconvergence{} of $(J_\varepsilon)_{\varepsilon > 0}$ to a \gammalimit{} $J_0 \colon X \to \RExt$, two inequalities have to be shown for each sequence $(\varepsilon_n)_{n\in\N}$ with $\varepsilon_n > 0$ for all $n\in\N$ and $\varepsilon_n \to 0$ when $n\to \infty$ and each $(\bOO, \bmu, \bSigma)\in X$:
\begin{enumerate}
    \item[(a)] \emph{lim inf inequality}:
    For every weakly* convergent sequence $(\bOO_n, \bmu_n, \bSigma_n)_{n \in \N} \subseteq X$ with limit $(\bOO, \bmu, \bSigma)$ it holds
    \begin{equation*}
        J_0[\bOO, \bmu, \bSigma] \leq \liminf_{n\to \infty} J_{\varepsilon_n} [\bOO_n, \bmu_n, \bSigma_n],
    \end{equation*}
    \item[(b)] \emph{lim sup inequality}:
    There is a weakly* convergent sequence $(\bOO_n, \bmu_n, \bSigma_n)_{n \in \N} \subseteq X$ with limit $(\bOO, \bmu, \bSigma)$ such that
    \begin{equation*}
        J_0[\bOO, \bmu, \bSigma] \geq \limsup_{n\to \infty} J_{\varepsilon_n} [\bOO_n, \bmu_n, \bSigma_n].
    \end{equation*}
\end{enumerate}
If these two conditions are satisfied, then $(J_\varepsilon)_{\varepsilon > 0}$ \gammaconverges{} to $J_0$, which will be shown next, including the specification of the limit $J_0$.
\begin{theorem}\label{th:gamma-convergence-J-eps}
    Consider the family of functionals $(J_\varepsilon)_{\varepsilon > 0}$, as defined in \eqref{eq:extension-J-epsilon} with $J_\varepsilon \colon X \to \RExt$.
    Define the functional $J_0$ as
    \begin{equation}\label{eq:def-J-0-formula}
        \begin{split}
            J_0 \colon X &\to \RExt,\\
            (\bOO, \bmu, \bSigma) &\mapsto \sum_{l=1}^k \left[\int_{\OO_l} \Vert g(x) - \mu_l \Vert_{\Sigma_l^{-\!1}\!\!, \eta} + \log \det \Sigma_l \dx + \lambda \perIm{\OO_l}\right].
        \end{split}
    \end{equation}
    Then, $(J_{\varepsilon})_{\varepsilon > 0}$ \gammaconverges{} to $J_0$ when $\varepsilon \to 0$ with respect to the weak* convergence in $X$ (cf. \cref{def:gamma-conv-notion-convergence}).
\end{theorem}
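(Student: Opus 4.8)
The plan is to verify the two conditions recalled above — the \emph{lim inf inequality}~(a) and the \emph{lim sup inequality}~(b) — for an arbitrary sequence $(\varepsilon_n)_{n\in\N}$ with $\varepsilon_n>0$ and $\varepsilon_n\to0$. The observation that organizes both parts is that $J_\varepsilon$ dominates $J_0$ on all of $X$: comparing \eqref{eq:extension-J-epsilon} with \eqref{eq:def-J-0-formula} and using $I_{\Pe}(\Sigma_l)\geq0$ yields $J_\varepsilon[\bOO,\bmu,\bSigma]\geq J_0[\bOO,\bmu,\bSigma]$ for every $\varepsilon>0$ and every $(\bOO,\bmu,\bSigma)\in X$, with equality exactly when $\Sigma_l\in\Pe$ for all $l\in\indexset[k]$. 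Hence~(a) reduces to sequential lower semicontinuity of $J_0$ with respect to the weak* convergence of \cref{def:gamma-conv-notion-convergence}, and~(b) reduces to producing, for a fixed $(\bOO,\bmu,\bSigma)\in X$, a recovery sequence along which the penalty $I_{\Pe}$ eventually disappears.

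For~(b) I would take the constant sequence $(\bOO_n,\bmu_n,\bSigma_n):=(\bOO,\bmu,\bSigma)$, which trivially converges to its limit in the sense of \cref{def:gamma-conv-notion-convergence}. Since each $\Sigma_l\in\Pp$ is positive definite, its smallest eigenvalue $\sigma(\Sigma_l)_L$ is strictly positive, so $\Sigma_l\in\Pe$ whenever $\varepsilon^2\leq\sigma(\Sigma_l)_L$; equivalently, $\bigcup_{\varepsilon>0}\Pe=\Pp$. As $\varepsilon_n\to0$ there is $N\in\N$ with $\Sigma_l\in\mathcal{P}_{\varepsilon_n}$ for all $l\in\indexset[k]$ and all $n\geq N$, whence $I_{\mathcal{P}_{\varepsilon_n}}(\Sigma_l)=0$ and $J_{\varepsilon_n}[\bOO,\bmu,\bSigma]=J_0[\bOO,\bmu,\bSigma]$ for $n\geq N$, so in fact $\lim_{n\to\infty}J_{\varepsilon_n}[\bOO,\bmu,\bSigma]=J_0[\bOO,\bmu,\bSigma]$, which is slightly stronger than~(b) requires.

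For~(a), given $(\bOO_n,\bmu_n,\bSigma_n)\toweakstar(\bOO,\bmu,\bSigma)$ in $X$, it suffices (by $J_{\varepsilon_n}\geq J_0$) to show that $J_0$ is sequentially weak*-lower-semicontinuous. For this I would reuse the argument already carried out in the proof of \cref{th:ms-existence-minimizer}, in particular the chain \eqref{eq:inequality-liminf-convergent-sequences}: pass to a subsequence realizing the relevant $\liminf$ of $J_0$ and then, since weak* convergence in $\bv$ entails strong $L^1(\imdom)$ convergence $\chi_{\OO_{n,l}}\to\chi_{\OO_l}$, extract a further subsequence (the index set $\indexset[k]$ being finite) along which $\chi_{\OO_{n,l}}\to\chi_{\OO_l}$ pointwise a.e.\ for every $l$, hence also in measure, so that $\perIm{\OO_l}\leq\liminf_n\perIm{\OO_{n,l}}$ by \cite[Prop.~3.38~(b)]{AmFuPa00}; because the limit $\Sigma_l\in\Pp$ is invertible, $\Sigma_{n,l}^{-1}\to\Sigma_l^{-1}$ by continuity of matrix inversion, and together with $\mu_{n,l}\to\mu_l$ and the continuity of $\sqrt{\cdot}$, $\det$ and $\log$ the integrands $\chi_{\OO_{n,l}}(x)\bigl(\Vert g(x)-\mu_{n,l}\Vert_{\Sigma_{n,l}^{-1},\eta}+\log\det\Sigma_{n,l}\bigr)$ converge pointwise a.e.\ to $\chi_{\OO_l}(x)\bigl(\Vert g(x)-\mu_l\Vert_{\Sigma_l^{-1},\eta}+\log\det\Sigma_l\bigr)$; Fatou's lemma \cite[Th.~1.20]{AmFuPa00}, summation over $l$ and subadditivity of $\liminf$ then complete~(a).

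The hard part will be the lower bound needed to invoke Fatou's lemma in~(a). In contrast to \cref{th:ms-existence-minimizer}, the crude estimate $\log\det\Sigma_{n,l}\geq\log\varepsilon_n^{2L}$ from \eqref{eq:lower-bound-log-det-sigma} is now useless, since $\log\varepsilon_n^{2L}\to-\infty$. Instead one should use that $\Sigma_{n,l}\to\Sigma_l$ with $\det\Sigma_l>0$ (because $\Sigma_l\in\Pp$), so that $\log\det\Sigma_{n,l}\to\log\det\Sigma_l$ is a convergent, hence bounded, real sequence; combined with $\Vert g(x)-\mu_{n,l}\Vert_{\Sigma_{n,l}^{-1},\eta}\geq\sqrt{\eta}>0$ and $0\leq\chi_{\OO_{n,l}}\leq1$, this yields a uniform constant lower bound for the integrands, which lies in $L^1(\imdom)$ as $\imdom$ is bounded. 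It is precisely this reliance on $\det\Sigma_l>0$ — i.e.\ on the limit genuinely residing in $\Pp$ rather than merely in its closure — that makes the lower semicontinuity of $J_0$ work on $X$, and it already foreshadows why $J_0$ may fail to attain its infimum once the eigenvalues are allowed to degenerate, the phenomenon studied in \cref{subsec:epsAMS-unboundedness-J-0}.
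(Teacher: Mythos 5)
Your proposal is correct and takes essentially the same route as the paper: the constant recovery sequence for the lim sup inequality (using that $\Sigma_l\in\Pp$ lies in $\mathcal{P}_{\varepsilon_n}$ for all large $n$), and, for the lim inf inequality, extraction of pointwise a.e.\ convergent subsequences, lower semicontinuity of the perimeter, continuity of inversion, determinant and logarithm, and Fatou's lemma, with the indicator term handled via $I_{\Pe}\geq 0$. Your explicit justification of the integrable lower bound for Fatou via $\log\det\Sigma_{n,l}\to\log\det\Sigma_l>-\infty$ (rather than the now-useless bound $\log\varepsilon_n^{2L}$) is a welcome detail that the paper leaves implicit by referring to the analogy with \eqref{eq:inequality-liminf-convergent-sequences}.
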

\begin{proof}
    Let $(\bOO, \bmu, \bSigma) \in X$ and $(\varepsilon_n)_{n\in \N}$ such that $\varepsilon_n > 0$ and $\varepsilon_n \stackrel{n \to \infty}{\longrightarrow} 0$.
    We start by showing the lim inf inequality.
    Let $(\bOO_n, \bmu_n, \bSigma_n)_{n \in \N} \subseteq X$ be a sequence such that $(\bOO_n, \bmu_n, \bSigma_n) \toweakstar (\bOO, \bmu, \bSigma)\in X$.
    Then, $(J_{\varepsilon_n}[\bOO_n, \bmu_n, \bSigma_n])_{n\in\N} \subseteq \RExt$ is the sequence of the corresponding functional values.
    Without loss of generality, we can assume
    \begin{equation*}
        \lim_{n\to \infty} J_{\varepsilon_{n}}[\bOO_{n}, \bmu_{n}, \bSigma_{n}] = \liminf_{n\to\infty} J_{\varepsilon_n}[\bOO_n, \bmu_n, \bSigma_n],
    \end{equation*}
    otherwise we select a subsequence of $(\bOO_n, \bmu_n, \bSigma_n)_{n \in \N}$ such that the corresponding functional values converge to $\liminf_{n\to \infty} J_{\varepsilon_n}[\bOO_n, \bmu_n, \bSigma_n]$.
    First, consider the term $I_{\Pe}(\Sigma_l)$ of \eqref{eq:extension-J-epsilon}.
    For any $\varepsilon > 0$ and $\Sigma \in \Pp$, we have $I_{\Pe}(\Sigma) \geq 0$ and $I_{\Pp}(\Sigma) = 0$, i.e.,
    \begin{equation}\label{eq:inequality-gamma-conv-liminf-indicator}
        I_{\Pp}(\Sigma_l) = 0 \leq \liminf_{n \to \infty} I_{\mathcal{P}_{\varepsilon_{n}}}(\Sigma_{n,l})\text{ for each }l\in \indexset[k].
    \end{equation}
    We now focus on the remaining terms of \eqref{eq:extension-J-epsilon}.
    Since $(\bOO_{n}, \bmu_{n}, \bSigma_{n}) \toweakstar (\bOO, \bmu, \bSigma)$, we have that $\chi_{\OO_{n,l}} \toweakstar \chi_{\OO_l}$ in $\bv$ and therefore $\chi_{\OO_{n,l}} \to \chi_{\OO_l}$ in $L^1(\imdom)$ for each $l \in \indexset[k]$, guaranteed by \cite[Prop. 3.13]{AmFuPa00}.
    In particular, we choose a subsequence of $(\bOO_{n}, \bmu_{n}, \bSigma_{n})_{n\in\N}$ such that $(\chi_{\OO_{n,l}})_{n\in\N}$ for $l=1$ converges point-wise a.e. to $\chi_{\OO_l}$, from which we select another subsequence such that $(\chi_{\OO_{n,l}})_{n\in\N}$ for $l=2$ converges point-wise a.e. to $\chi_{\OO_l}$, and so on until $l=k$.
    The existence of such subsequences follows from \cite[Lemma 3.22 (1)]{Al16}.
    The resulting subsequence is again denoted with $(\bOO_{n}, \bmu_{n}, \bSigma_{n})_{n\in\N}$.
    Analogously to the derivation in \eqref{eq:inequality-liminf-convergent-sequences}, we get
    \begin{align*}
        &\int_{\OO_l} \Vert g(x) - \mu_l \Vert_{\Sigma_l^{-\!1}\!\!, \eta} + \log \det \Sigma_l \dx + \lambda \perIm{\OO_l}\\
        \leq& \liminf_{n\to\infty} \int_{\OO_{n,l}} \Vert g(x) - \mu_{n,l} \Vert_{\Sigma_{n,l}^{-\!1}, \eta} + \log \det \Sigma_{n,l} \dx + \lambda \perIm{\OO_{n,l}}
    \end{align*}
    for each $l\in \indexset[k]$. Combined with \eqref{eq:inequality-gamma-conv-liminf-indicator}, we conclude
    \begin{equation*}
        J_0[\bOO, \bmu, \bSigma]
        \leq \liminf_{n\to\infty} J_{\varepsilon_n}[\bOO_n, \bmu_n, \bSigma_n].
    \end{equation*}
    This shows the lim inf inequality.

    To show the lim sup inequality, let $(\bOO, \bmu, \bSigma) \in X$ and $(\varepsilon_n)_{n\in\N} \subseteq \R$ such that $\varepsilon_n > 0$ and $\varepsilon_n \to 0$ when $n\to \infty$.
    We choose the constant sequence $(\bOO_n, \bmu_n, \bSigma_n)_{n\in\N}$ with $(\bOO_n, \bmu_n, \bSigma_n) = (\bOO, \bmu, \bSigma)$ for every $n\in \N$.
    This sequence obviously converges weakly* to $(\bOO, \bmu, \bSigma)$.
    Moreover, note that if $\varepsilon'' \geq \varepsilon' > 0$, then it follows $\mathcal{P}_{\varepsilon''} \subseteq \mathcal{P}_{\varepsilon'}$.
    Combined with $\varepsilon_n \to 0$ and $\Sigma_l\in\Pp$, there exists an $N\in\N$ such that $\Sigma_{n,l} = \Sigma_l \in \mathcal{P}_{\varepsilon_n}$ for every $l \in \indexset[k]$ and every $n \geq N$.
    As a consequence, we obtain that only for finitely many $n \in \N$, namely for at most $N-1$, it can hold
    \begin{equation*}
        J_{\varepsilon_n}[\bOO_n, \bmu_n, \bSigma_n] = J_{\varepsilon_n}[\bOO, \bmu, \bSigma] = \infty.
    \end{equation*}
    For all $n \geq N$, it must hold
        $I_{\mathcal{P}_{\varepsilon_{n}}}(\Sigma_{n,l}) = I_{\mathcal{P}_{\varepsilon_{n}}}(\Sigma_{l}) = 0,$
    and therefore
    \begin{equation*}
        J_{\varepsilon_n}[\bOO_n, \bmu_n, \bSigma_n] = J_{\varepsilon_n}[\bOO, \bmu, \bSigma] = J_0[\bOO, \bmu, \bSigma].
    \end{equation*}
    We can conclude that the sequence of evaluations $(J_{\varepsilon_n}[\bOO_n, \bmu_n, \bSigma_n])_{n\in\N} \subseteq \RExt$ is convergent with limit $J_0[\bOO, \bmu, \bSigma]$.
    This results in
    \begin{equation*}
        J_0[\bOO, \bmu, \bSigma] = \lim_{n \to \infty} J_{\varepsilon_n}[\bOO_n, \bmu_n, \bSigma_n] = \limsup_{n\to\infty} J_{\varepsilon_n}[\bOO_n, \bmu_n, \bSigma_n]
    \end{equation*}
    and shows the lim sup inequality.
\end{proof}
\noindent
Please note that $J_0$ is weakly* lower semi-continuous as a \gammalimit{} \cite{Br02}.

\subsection{Loss of closedness of \texorpdfstring{$\Pe$}{Pepsilon} when \texorpdfstring{$\varepsilon \to 0$}{epsilon to zero}}\label{subsec:epsAMS-unboundedness-J-0}
The \gammaconvergence{} of a family of functionals to a \gammalimit{} implies under certain conditions also the convergence of minimizers of the family of functionals to a minimizer of the \gammalimit{} (cf. \cite[Def. 1.19, Th. 1.21]{Br02}).
However, these conditions are not fulfilled in our case.
In particular, we will now see an explicit example where a minimizer does not exist if we do not use the regularization with $\varepsilon > 0$, i.e., if we allow $\Sigma_l \in \Pp$ instead of restricting it to $\Sigma_l \in \Pe$.

While $\bOO \in \AAA$ and $\bmu \in \left(\R^L\right)^k$ do not cause any trouble as $J_0$ is bounded below with respect to these parameters, the data term of $J_0$, which is the square root in \eqref{eq:def-J-0-formula}, and the term $\log \det \Sigma_l$ do not always balance each other. For certain images $g$, the latter can make the functional arbitrarily small because certain terms of the former vanish.
To see this, we consider for $l\in \indexset[k]$ the corresponding parts of $J_0$ from 
\eqref{eq:def-J-0-formula} and let $\Sigma_l = V_l D_l V_l^T$ be the eigendecomposition of $\Sigma_l$ with an orthogonal matrix $V_l \in \R^{L \times L}$ and a diagonal matrix $D_l \in \R^{L\times L}$ with the eigenvalues of $\Sigma_l$ on its diagonal.
For $\Sigma_l^{-1}$, it holds $\Sigma_l^{-1} = V_l D_l^{-1} V_l^T$.
We obtain
\begin{align}\label{eq:J-0-formula-segment-l}
    &\int_{\OO_l} \Vert g(x) - \mu_l \Vert_{\Sigma_l^{-\!1}\!\!, \eta} + \log \det \Sigma_l \dx\notag\\
    =& \int_{\OO_l} \sqrt{\left[V_l^T\left(g(x) - \mu_l\right)\right]^T D_l^{-1} \left[V_l^T\left(g(x) - \mu_l\right)\right] + \eta}\dx + \lebesgue{\OO_l} \log \left(\prod_{i=1}^{L} \sigma(\Sigma_l)_i\right)\notag\\
    =& \int_{\OO_l} \sqrt{\sum_{i=1}^{L} \frac{\left[V_l^T(g(x) - \mu_l)\right]_i^2}{\sigma(\Sigma_l)_i} + \eta}\dx + \lebesgue{\OO_l} \sum_{i=1}^{L} \log \sigma(\Sigma_l)_i.
\end{align}
The data term benefits from $\sigma(\Sigma_l)_i$ for $i\in \indexset[L]$ being very large as this makes the fraction small and approach $0$.
On the contrary, the $\log \det \Sigma_l$ term benefits from bringing $\sigma(\Sigma_l)_i$ as close to $0$ as possible.
In this sense, the two terms are antagonists.

An example in which they do not balance each other is as follows:
Let $g \in L^\infty(\imdom, \R^L)$ such that $g(\imdom)$ is $(L - m)$-dimensional for a fixed $m \in \indexset[L - 1]$.
Since $\imdom$ is bounded, we get $g \in L^2(\imdom, \R^L)$.
Fix $\OO_l \subseteq \imdom$ and $\mu_l \in \R^L$.
We take an orthonormal basis of $\operatorname{span}(g(\imdom))$ and complete it to an orthonormal basis of $\R^L$ that we write as columns into a matrix $V_l \in \R^{L\times L}$.
This basis consists of vectors $v_1, \dots, v_{L-m} \in \R^L$ that are a basis of $\operatorname{span}(g(\imdom))$ and vectors $v_{L - m + 1},\dots, v_L \in \R^L$ that are a basis of $\operatorname{span}(g(\imdom))^\perp$.
We choose a diagonal matrix $D_l \in \R^{L\times L}$ with entries $\sigma_1,\dots,\sigma_L > 0$ on its diagonal and set $\Sigma_l := V_l D_l V_l^T$.
It holds $\Sigma_l \in \Pp$.
Then, $\langle\cdot,\cdot\rangle\colon \R^L \times \R^L \to \R, (y, z) \mapsto y^T \Sigma_l^{-1} z$ defines an inner product on $\R^L$.
We have $\langle g(x), v_{L - m + i}\rangle = 0$ for almost every $x \in \imdom$ and every $i\in \indexset[m]$.
Let $P\colon \R^L \to \R^L$ denote the orthogonal projection with respect to $\langle \cdot, \cdot \rangle$ onto the subspace spanned by $v_{L-m+1}, \dots, v_{L}$ and $I$ the identity operator.
We find
\begin{align*}
    &\Vert g(x) - \mu_l \Vert_{\Sigma_l^{-\!1}\!\!, \eta}^2
    = \Vert g(x) - (I - P)\mu_l - P \mu_l \Vert_{\Sigma_l^{-\!1}\!\!, \eta}^2\\
    ={}& \langle g(x) - (I - P)\mu_l, g(x) - (I - P)\mu_l\rangle + \langle P\mu_l, P \mu_l \rangle + \eta\\
    ={}& \Vert g(x) - (I - P)\mu_l \Vert_{\Sigma_l^{-\!1}\!\!, \eta}^2 + \langle P\mu_l, P\mu_l \rangle
    \geq \Vert g(x) - (I - P)\mu_l \Vert_{\Sigma_l^{-\!1}\!\!, \eta}^2.
\end{align*}
The third equality holds by the linearity of the inner product and the orthogonality of $g(\imdom)$ and $v_{L - m + 1}, \dots, v_L$.
The inequality is true because of the positive definiteness of an inner product.
That means replacing $\mu_l$ with $(I - P)\mu_l$ does not increase \eqref{eq:J-0-formula-segment-l}, i.e., we can assume $\mu_l\in\operatorname{span}(g(\imdom))$ when searching for a minimizer.
Using this, we obtain analogously to the derivation of \eqref{eq:J-0-formula-segment-l}
\begin{align*}
    &\int_{\OO_l} \Vert g(x) - \mu_l \Vert_{\Sigma_l^{-\!1}\!\!, \eta} + \log \det \Sigma_l \dx\\
    =& \int_{\OO_l} \sqrt{\sum_{i=1}^{L} \frac{\left[V_l^T(g(x) - \mu_l)\right]_i^2}{\sigma_i} + \eta}\dx + \lebesgue{\OO_l} \sum_{i=1}^{L} \log \sigma_i\\
    =& \int_{\OO_l} \sqrt{\sum_{i=1}^{L-m} \frac{\left[v_i^T (g(x) - \mu_l)\right]^2}{\sigma_i} + \eta}\dx + \lebesgue{\OO_l} \sum_{i=1}^{L} \log \sigma_i.
\end{align*}
The last equality holds by orthogonality.
We observe that in this example the eigenvalues $\sigma_{L - m + 1}, \dots, \sigma_L$ are not present in the data term.
As a consequence, during minimization the functional benefits from bringing these values closer to $0$ since the data term does not change but $\log \det \Sigma_l$ decreases.
In this particular case, we even see that $J_0$ is unbounded from below because having the eigenvalues tending to $0$ will let the value of $J_0$ diverge to $-\infty$.
One could circumvent this divergence, e.g., by replacing $\log$ with $\log(\cdot + 1)$, which is bounded below for positive arguments resulting in a lower bound for $J_0$.
Nevertheless, the problem that $J_0$ benefits from bringing the eigenvalues closer to $0$ remains.
When applying $J_\varepsilon$ in its original version defined on $X_\varepsilon$ (cf. \eqref{eq:definition-J-epsilon}) to such an image $g$, the functional will push $\sigma_{L - m + 1}, \dots, \sigma_L$ to be equal to $\varepsilon^2$ in the minimum.
Since $\Pe$ is closed, such minimizers are still in $\Pe$.
The extension of $J_\varepsilon$ to the open set $\Pp$ (cf. \eqref{eq:extension-J-epsilon}) stayed coercive with respect to $\Sigma_l$ due to the addition of $I_{\Pe}$ since it prevents $J_\varepsilon$ from making the eigenvalues smaller than $\varepsilon^2$ and thus avoids the problem that $\Pp$ is open and the eigenvalues could approach $0$.
Note that here the term \emph{coercivity} has to be defined in a more general way than usual, describing that the functional shows a rapid growth at the extremes of the domain.
However, in the \gammalimit{} when $\varepsilon \to 0$, we lose the closedness of the admissible set without compensating for it.
Hence, $J_0$ does not satisfy a condition like coercivity (in the general sense explained above) in the eigenvalues of $\Sigma_l$ and we also lose the existence of a minimizer in general.%

\subsection{Bound for eigenvalues of covariance estimates}\label{subsec:epsAMS-boundedness-eigenvalues}
We have seen in \cref{subsec:epsAMS-unboundedness-J-0} that, for certain images $g$, we lose the boundedness from below when not regularizing with $\varepsilon$ and allowing $\Sigma_l \in \Pp$ for $l\in \indexset[k]$.
However, it was already hinted before that the data term $\Vert g(x) - \mu_l \Vert_{\Sigma_l^{-\!1}\!\!, \eta}$ and the $\log \det \Sigma_l$ term in \eqref{eq:def-J-0-formula} balance each other when the image $g$ is sufficiently benign and $\lebesgue{\OO_l} > 0$.
We first note that we can consider each summand of the outer sum over $l\in \indexset[k]$ in \eqref{eq:def-J-0-formula} separately and that we only have to consider the integral because only here the eigenvalues of $\Sigma_l$ play a role.
Furthermore, we will use that for a real-valued concave function $f$ with domain $D \subseteq \R$ the inequality
$    f\left(\sum_{i=1}^n a_i x_i\right) \geq \sum_{i=1}^{n} a_i f\left(x_i\right)$
holds for coefficients $a_1, \dots, a_n \geq 0$ with $\sum_{i=1}^{n} a_i = 1$ and $x_1, \dots, x_n \in D$.
Let $\Sigma_l = V_l D_l V_l^T$ be the eigendecomposition of $\Sigma_l$ with an orthogonal matrix $V_l \in \R^{L\times L}$ with columns $v_{l,i}$ for $i\in \indexset[L]$ and a diagonal matrix $D_l \in \R^{L \times L}$ with the eigenvalues of $\Sigma_l$ on its diagonal.
It follows with \eqref{eq:J-0-formula-segment-l}
\begin{align*}
    &\int_{\OO_l} \Vert g(x) - \mu_l \Vert_{\Sigma_l^{-\!1}\!\!, \eta} + \log \det \Sigma_l \dx\\
    \geq& \int_{\OO_l} \sqrt{\sum_{i=1}^{L} \frac{\left[V_l^T(g(x) - \mu_l)\right]_i^2}{\sigma(\Sigma_l)_i}}\dx + \lebesgue{\OO_l} \sum_{i=1}^{L} \log \sigma(\Sigma_l)_i\\
    =& \int_{\OO_l} \sqrt{L} \sqrt{\sum_{i=1}^{L} \frac{1}{L} \left[v_{l,i}^T\left(g(x) - \mu_l\right)\right]^2 \frac{1}{\sigma(\Sigma_l)_i}}\dx + \lebesgue{\OO_l} \sum_{i=1}^{L} \log \sigma(\Sigma_l)_i\\
    \geq& \int_{\OO_l} \frac{\sqrt{L}}{L} \sum_{i=1}^{L} \sqrt{\left[v_{l,i}^T\left(g(x) - \mu_l\right)\right]^2 \frac{1}{\sigma(\Sigma_l)_i}}\dx + \lebesgue{\OO_l} \sum_{i=1}^{L} \log \sigma(\Sigma_l)_i\\
    =& \int_{\OO_l} \frac{1}{\sqrt{L}} \sum_{i=1}^{L} \left\vert v_{l,i}^T\left(g(x) - \mu_l\right)\right\vert \frac{1}{\sqrt{\sigma(\Sigma_l)_i}}\dx + \lebesgue{\OO_l} \sum_{i=1}^{L} \log \sigma(\Sigma_l)_i\\
    =& \sum_{i=1}^{L} \left(\frac{1}{\sqrt{L}} \int_{\OO_l} \left\vert v_{l,i}^T\left(g(x) - \mu_l\right)\right\vert \dx \; \frac{1}{\sqrt{\sigma(\Sigma_l)_i}} + \lebesgue{\OO_l} \log \sigma(\Sigma_l)_i\right).
\end{align*}
The summands of the outer sum are uncoupled with respect to $\sigma(\Sigma_l)_i$, thus they can be considered separately.
For each $i\in \indexset[L]$, the summand is a function of $\sigma(\Sigma_l)_i$ of the form
\begin{equation}\label{eq:minimum-eigenvalues-function}
    a/\sqrt{x} + b \log x,
\end{equation}
for coefficients $a\geq 0$ and $b > 0$.
This function is defined for $x > 0$ and has a minimum at $x^* = \left(\frac{a}{2 b}\right)^2$ if $a > 0$.
For $x < x^*$, it is strictly monotonically decreasing and, for $x > x^*$, strictly monotonically increasing.
Hence, the minimum at $x^*$ is global.
Consequently, the following $\sigma(\Sigma_l)_i$ minimizes the $i$-th summand
\begin{equation}\label{eq:minimum-eigenvalue-covariance-J-0}
    \sigma(\Sigma_l)_i^* = \left(\int_{\OO_l} \left\vert v_{l,i}^T\left(g(x) - \mu_l\right)\right\vert \dx\Big/\left(2 \sqrt{L} \lebesgue{\OO_l}\right)\right)^2.
\end{equation}
While the denominator is always greater $0$ under the assumption of $\OO_l$ having positive measure, the integral in the numerator can be $0$.
This is the case when $v_{l,i}$ and $g(x) - \mu_l$ are perpendicular for almost every $x \in \OO_l$.
Geometrically speaking, we require that $g(x)$ deviates from $\mu_l$ in the direction of $v_{l,i}$ for almost every $x\in \OO_l$ or, in terms of spectra of images, that there is variation in the spectra of segment $l$ along the direction of $v_{l,i}$.
A visualization is given in \cref{fig:visualization-bound-eigenvalues}.
\begin{figure}[htp]
    \centering
    \pgfplotstableread{input/plot_feat_dist_seg_data.csv}{\seg}
\pgfplotstableread{input/plot_feat_dist_seg_mean_data.csv}{\segmean}

\begin{tikzpicture}[scale=0.99]
  \definecolor{color0}   {RGB}{  0  84 159}
  \definecolor{rwth-50}{RGB}{142 186 229}
  \definecolor{boundary_color0}{RGB}{0, 33, 65}

  \definecolor{color1}   {RGB}{204   7  30}
  \definecolor{rot-50}{RGB}{230 150 121}
  \definecolor{boundary_color1}{RGB}{64, 7, 30}

  \begin{axis}[
    xticklabel = \empty,
    yticklabel = \empty,
    width = .8\textwidth,
    height = 0.4\textwidth,
    legend cell align = {left},
    legend pos = north east,
    xlabel = {$g^1(x)^T v_{l,j}^1$},
    ylabel = {$g^1(x)^T v_{l,i}^1$},
  ]

  \addplot[draw=boundary_color0, fill=color0, only marks] table [x = {x}, y = {y}] {\seg};
  \addplot[draw=black, fill=rwth-50, only marks, mark size=5pt] table [x = {x}, y = {y}] {\segmean};

  \legend{
      Spectra from $g^1 \vert_{\OO_l}$,
      Segment's mean $\mu_l^1$
    }

    \node (source) at (axis cs:0.0, 0.0){};
    \node[label={[xshift=-5.5mm, yshift=-8mm, scale=1.5]{$v_{l,i}^1$}}] (destination) at (axis cs:0.0, 2500.0){};
    \draw[->, line width=1.mm](source)--(destination);

    \node (source) at (axis cs:0.0, 0.0){};
    \node[label={[xshift=-3mm, yshift=-1.5mm, scale=1.5]{$v_{l,j}^1$}}] (destination) at (axis cs:2500.0, 0.0){};
    \draw[->, line width=1.mm](source)--(destination);
\end{axis}

\end{tikzpicture}\\
    a) Samples from spectral distribution with variation in both depicted directions.\\[2ex]
    \pgfplotstableread{input/plot_feat_dist_seg_proj_data.csv}{\seg}
\pgfplotstableread{input/plot_feat_dist_seg_mean_proj_data.csv}{\segmean}

\begin{tikzpicture}[scale=0.99]
  \definecolor{color0}   {RGB}{  0  84 159}
  \definecolor{rwth-50}{RGB}{142 186 229}
  \definecolor{boundary_color0}{RGB}{0, 33, 65}

  \definecolor{color1}   {RGB}{204   7  30}
  \definecolor{rot-50}{RGB}{230 150 121}
  \definecolor{boundary_color1}{RGB}{64, 7, 30}

  \begin{axis}[
    xticklabel = \empty,
    ymin=-1.0, ymax=1.5,
    yticklabel = \empty,
    width = .8\textwidth,
    height = 0.4\textwidth,
    legend cell align = {left},
    legend pos = north east,
    xlabel = {$g^2(x)^T v_{l,j}^2$},
    ylabel = {$g^2(x)^T v_{l,i}^2$},
  ]

  \addplot[draw=boundary_color0, fill=color0, only marks] table [x = {x}, y = {y}] {\seg};
  \addplot[draw=black, fill=rwth-50, only marks, mark size=5pt] table [x = {x}, y = {y}] {\segmean};

  \legend{
        Spectra from $g^2\vert_{\OO_l}$,
        Segment's mean $\mu_l^2$
    }

    \node (source) at (axis cs:0.0, 0.0){};
    \node[label={[xshift=-6mm, yshift=-8mm, scale=1.5]{$v_{l,i}^2$}}] (destination) at (axis cs:0.0, 1.3){};
    \draw[->, line width=1.mm](source)--(destination);

    \node (source) at (axis cs:0.0, 0.0){};
    \node[label={[xshift=-6mm, yshift=-12mm, scale=1.5]{$v_{l,j}^2$}}] (destination) at (axis cs:2500.0, 0.0){};
    \draw[->, line width=1.mm](source)--(destination);
\end{axis}

\end{tikzpicture}\\
    b) Samples from spectral distribution with variation in only one direction.
    \caption{The two figures show samples from two exemplary spectral distributions of $g^m\vert_{\OO_l}$ for a segment $\OO_l\subseteq \imdom$ and images $g^m\in L^\infty(\imdom, \R^L)$, $m\in \{1,2\}$.
    The arrows $v_{l,i}^m \in \R^L$ and $v_{l,j}^m \in \R^L$ are the $i$-th and $j$-th eigenvector of the symmetric and positive-semidefinite covariance matrix $\Sigma_l^m \in \R^{L\times L}$.
    The projections of the sampled spectra onto those vectors are plotted.
    In the top figure, there is variation of the spectra in both directions $v_{l,i}^1$ and $v_{l,j}^1$, which results in estimates for the $i$-th and $j$-th eigenvalue of $\Sigma_l^1$ being bounded from below with a positive bound (cf. \cref{eq:minimum-eigenvalue-covariance-J-0}) and hence $\Sigma_l^1 \in \Pp$ (provided that we also have variation in the directions of all other eigenvectors $v_{l,t}^1$ for $t\in \indexset[L]\setminus \{i,j\}$).
    In the bottom figure, there is only variation of the spectra in direction $v_{l,j}^2$, not in direction $v_{l,i}^2$.
    This results in an estimate for the $i$-th eigenvalue of $\Sigma_l^2$ equal to $0$, which is not admissible anymore since then $\Sigma_l^2 \notin \Pp$.
    Technically, we have $a = 0$ in \eqref{eq:minimum-eigenvalues-function} in this situation, making the function unbounded from below and \cref{eq:minimum-eigenvalue-covariance-J-0} infeasible.}
    \label{fig:visualization-bound-eigenvalues}
\end{figure}
A heuristic for practical applications to achieve that $\sigma(\Sigma_l)_i^* > 0$ for all $i\in \indexset[L]$ is to apply the \ac{pca} \cite{Pe01} to the data before processing it and keep only the components needed to describe 99.9\% of the variance.

\section{Conclusion}\label{sec:conclusion}
We analyzed the distribution-dependent \acl{ms} model for hyperspectral image segmentation and its covariance regularization with a parameter $\varepsilon$.
We showed the existence of minimizers and the \gammaconvergence{} of the functional for $\varepsilon \to 0$.
Moreover, we gave an example of an image where the \gammalimit{} does not have a minimizer.
It turned out that the regularization with $\varepsilon$ plays a very important role in the segmentation model since it ensures the feasibility of the model, allows to find bounds for the minimizing sequence such that convergent subsequences can be chosen and most importantly makes the set of admissible covariance matrices closed.
In the \gammalimit{}, the set of admissible covariance matrices is not closed anymore.
Together with the expression for the minimum values of the eigenvalues that was derived, we saw that this allows to push the eigenvalues of the covariance matrices arbitrarily close to $0$ and therefore the covariance matrices out of the admissible set when there is no variation in the data in direction of the corresponding eigenvectors.
Consequently, the functional in its current form needs the $\varepsilon$-regularization.

We plan to follow two future directions: we will investigate conditions on the data that guarantee the existence of minimizers %
in the \gammalimit{}, and without altering the functional.
The formula for the minimum eigenvalues can serve as good starting point since we need the numerator to be always greater than a positive constant.
Moreover, as already mentioned, the set of admissible covariance matrices is not closed anymore in the \gammalimit{}.
Therefore, we will secondly aim to find a modification of the functional that guarantees the existence of minimizers without regularizing the covariance estimates or any condition on the data.
An idea how to achieve that is to alter the model such that it fulfills a coercivity condition that makes the functional values explode when an eigenvalue of a covariance matrix is approaching $0$ and running out of the admissible set.
The point at which the functional explodes, when the eigenvalues fall below it, should, however, depend on the data.
Also for this future direction the expression for the minimum eigenvalues is a good starting point.

\begin{acknowledgement}
    Jan-Christopher Cohrs was funded by the Deutsche Forschungsgemeinschaft (DFG, German Research Foundation) - 333849990/GRK2379 (IRTG Modern Inverse Problems).
\end{acknowledgement}
\ethics{Competing Interests}{
    The authors have no conflicts of interest to declare that are relevant to the content of this chapter.
}

\bibliographystyle{spmpsci}
\bibliography{references}

\end{document}